\documentclass[11pt,a4paper]{article}

\usepackage[comma]{natbib}
\usepackage{anysize}
\usepackage{color}
\usepackage{graphicx}
\usepackage{pdfsync}

\usepackage{amsmath,amssymb,amsthm,amsopn}
\renewcommand{\vec}[1]{\boldsymbol{#1}}

\newcommand{\mat}[1]{\mathbf{#1}}

\newtheorem{lemma}{Lemma}
\newtheorem{theorem}{Theorem}
\theoremstyle{remark}

\DeclareMathOperator{\E}{\boldsymbol{\mathbb{E}}}
\DeclareMathOperator{\Var}{Var}

 \DeclareMathOperator{\VEC}{vec}
 \DeclareMathOperator{\tr}{tr}

\def\ISE{\mathrm{ISE}}
\def\MISE{\mathrm{MISE}}

\def\AMISE{\mathrm{AMISE}}

\def\SCV{\mathrm{SCV}}

\def\PI{\mathrm{PI}}
\def\CV{\mathrm{CV}}

\renewcommand{\today}{\begingroup
\number \day\space  \ifcase \month \or January\or February\or
March\or April\or May\or June\or July\or August\or September\or
October\or November\or December\fi \space  \number \year \endgroup}

%\marginsize{3cm}{3cm}{3cm}{3cm}

\newcommand{\D}{\mathsf{D}}
\newcommand{\bH}{{\bf H}}
\newcommand{\bG}{{\bf G}}
\newcommand{\bI}{{\bf I}}
\newcommand{\bw}{{\boldsymbol w}}
\newcommand{\bx}{{\boldsymbol x}}
\newcommand{\by}{{\boldsymbol y}}
\newcommand{\bz}{{\boldsymbol z}}
\newcommand{\bX}{{\bf X}}
\newcommand{\bY}{{\bf Y}}

\newcommand{\bmu}{{\boldsymbol \mu}}
\newcommand{\bpsi}{{\boldsymbol \psi}}

\renewcommand{\S}{\boldsymbol{\mathcal S}}
\newcommand{\bSigma}{{\boldsymbol \Sigma}}

\renewcommand{\vec}{\operatorname{vec}}
\newcommand{\vect}{\operatorname{vec}^\top}

\theoremstyle{plain}

\newtheorem{teor*}{Teorema}

\theoremstyle{definition}

\pagestyle{myheadings}

\setlength{\evensidemargin}{0.2in}%
\setlength{\textwidth}{6in}
\setlength{\topmargin}{-1cm}\setlength{\textheight}{8.8in}
\setlength{\oddsidemargin}{\evensidemargin} \hfuzz2pt

%\hfuzz2pt
%
%\makeatletter
%\def\cleardoublepage{\clearpage\if@twoside \ifodd\c@page\else
%    \hbox{}
%    \vspace*{\fill}
%    \vspace{\fill}
%    \thispagestyle{empty}
%    \newpage
%    \if@twocolumn\hbox{}\newpage\fi\fi\fi}
%\makeatother

\title{Data-driven density derivative estimation, with applications to nonparametric
clustering and bump hunting}

\author{Jos\'e E. Chac\'on\footnote{Departamento de
Matem\'aticas, Universidad de Extremadura, E-06006 Badajoz, Spain. E-mail:
{\tt jechacon@unex.es}} \ and Tarn Duong\footnote{Theoretical and Applied Statistics Laboratory (LSTA), University Pierre and Marie Curie -- Paris 6, F-75005, Paris, France; Institute of Translational Neurosciences (IHU-A-ICM), Piti\'e-Salp\^etri\`ere Hospital, F-75005, Paris, France. Email:
{\tt tarn.duong@upmc.fr}}}

\begin{document}

\maketitle

\begin{abstract}
\noindent Important information concerning a multivariate data set, such as clusters and modal
regions, is contained in the derivatives of the probability density function. Despite this
importance, nonparametric estimation of higher order derivatives of the density functions have
received only relatively scant attention. Kernel estimators of density functions are widely used as
they exhibit excellent theoretical and practical properties, though their generalization to density
derivatives has progressed more slowly due to the mathematical intractabilities encountered in the
crucial problem of bandwidth (or smoothing parameter) selection.  This paper presents the first
fully automatic, data-based bandwidth selectors for multivariate kernel density derivative
estimators. This is achieved by synthesizing recent advances in matrix analytic theory which allow
mathematically and computationally tractable representations of higher order derivatives of
multivariate vector valued functions. The theoretical asymptotic properties as well as the finite
sample behaviour of the proposed selectors are studied. {In addition, we explore in detail the
applications of the new data-driven methods for two other statistical problems: clustering and bump
hunting. The introduced techniques are combined with the mean shift algorithm to develop novel
automatic, nonparametric clustering procedures which are shown to outperform mixture-model cluster
analysis and other recent nonparametric approaches in practice. Furthermore, the advantage of the
use of smoothing parameters designed for density derivative estimation for feature significance
analysis for bump hunting is illustrated with a real data example.}
\end{abstract}

\medskip
\noindent {\em Keywords:} adjusted Rand index, cross validation, feature significance,
nonparametric kernel method, mean integrated squared error, mean shift algorithm, plug-in choice

\newpage

\section{Introduction}

The estimation of density derivatives has full potential for applications. This has been noted even
in the first seminal papers on density estimation, as \cite{Pa62}, which was also concerned with
the estimation of the mode of a unimodal distribution, the value that makes zero the first density
derivative. In the multivariate case, the pioneering work of \cite{FH75} showed how the estimation
of the gradient vector can also be used for clustering and data filtering, leading to a substantial
amount of literature on the subject, under the name of the {\em mean shift algorithm}. Looking
further afield, \cite{Ch95} made use of the mean shift idea for image analysis, and the
highly-cited paper by \cite{CM02} showed how these techniques can be useful for low-level vision
problems, discontinuity preserving smoothing and image segmentation. A further very popular use of
the mean shift algorithm is for real-time object tracking, as described in \cite{CRM03}.

From the perspective of statistical data analysis, in the multidimensional context the estimation
of the first and second derivatives of the density is crucial to identify significant features of
the distribution, such as local extrema, valleys, ridges or saddle points. In this sense,
\cite{GMC02} developed methods for determining and visualizing such features in dimension two,
extending previous work on scale space ideas introduced in \cite{CM99} for the univariate case (the
SiZer approach), and the same authors also explored the application of this methodology to digital
image analysis in \cite{GMC04}. \cite{DCKW08} generalized these results for multivariate data in
arbitrary dimensions and provided a novel visualization for three-dimensional data. These
techniques have been widely applied recently in the field of flow cytometry; see \cite{Ze07},
\cite{NW09} or \cite{Pr09}.

Another relatively new problem that is closely related to gradient estimation is that of finding filaments in point clouds, which has applications in medical imaging, remote sensing, seismology and cosmology. This problem is rigorously stated and analyzed in \cite{GPVW09}. Filaments are one-dimensional curves embedded in a point process, and it can be shown that steepest ascent paths (i.e., the paths from each point following the gradient direction) concentrate around them, so gradient estimation appears as a useful tool for filament detection.

In this paper we focus on kernel estimators of multivariate density derivatives of arbitrary order,
formally defined in Section \ref{sec:2} below. As for any kernel estimator, it is well known that
the crucial factor that determines the performance of the estimator in practice is the choice of
the bandwidth matrix. In the multivariate setting there are several levels of sophistication at the
time of specifying the bandwidth matrix to be used in the kernel estimator \citep[see][Chapter
4]{WJ95}. The most general bandwidth type consists of a symmetric positive definite matrix; it
allows the kernel estimator to smooth in any direction whether coordinate or not. This general
class of bandwidth matrices can be constrained to consider positive definite diagonal matrices,
allowing for different degrees of smoothing along each of the coordinate axis, or even further to
consider a bandwidth matrix involving only a positive scalar multiple of the identity matrix,
meaning that the same smoothing is applied to every coordinate direction. As noted in \cite{WJ93}
in the density estimation context, the single-parameter class should not be used for unscaled data
or, as stated by \cite{CM02} in terms of feature space analysis, to use this bandwidth class at
least the validity of an Euclidean metric for the feature space should be previously checked.

The simpler parameterizations are in general more widely used than the unconstrained counterpart
for two reasons: first, in practice they need less smoothing parameters to be tuned, and second,
due to the difficulties encountered in the mathematical analysis of estimators with unconstrained
bandwidths. However, \cite{CDW11} provided a detailed error analysis of kernel density derivative
estimators with unconstrained bandwidths and showed that the use of the simpler parameterizations
can lead to a substantial loss in terms of efficiency, and that this problem becomes more and more
important as the order of the derivative to be estimated increases.

\cite{CDW11} also proposed an optimal bandwidth selector for the normal case, but they did not
develop more sophisticated data-driven choices of the bandwidth matrix with applicability to more
general densities, which is crucial to make density derivative estimation useful in practice. Along
the same lines, \cite{CM02} argue that most existing bandwidth selection methods for the mean shift
algorithm, all of them for the single-parameter class of bandwidths, are based on empirical
arguments.

In the univariate case there exist some approaches to bandwidth selection for density derivative
estimation: \cite{HMW90} introduced a cross validation method and showed its optimality;
\cite{Jon92} derived the relative rate of convergence of this method and also for a plug-in
proposal; \cite{Wu97} studied two root $n$ selectors in the Fourier domain, and more recently
\cite{DR10} focused on the smoothed cross validation bandwidth selector for the density derivative.
In the multivariate case, however, the issue of automatic bandwidth selection for density
derivative estimation has remained largely unexplored. Given the smaller body of multivariate
density estimation research as compared to their univariate cousins, it is not surprising that
multivariate density derivative estimation suffers equally (if not more so) from a lack of solid
results. To the best of our knowledge, in the literature the only published approaches to bandwidth
selection for multivariate kernel estimation of density derivatives are the recent papers
\cite{HV11} and \cite{HKV13}, but both focus exclusively on the first derivative.

{This paper proposes three new methods for unconstrained bandwidth matrix selection for the
multivariate kernel density derivative estimator, and explores their applicability to other related
statistical problems}. In Section \ref{sec:2}, we introduce the mathematical framework for the
analysis of multivariate derivatives. In Section \ref{sec:3} we show that the relative rate of
convergence of these unconstrained selectors is the same as for the classes of simpler bandwidth
matrices, so that from an asymptotic point of view our methods can be as successful as (and more
flexible than) those needing less smoothing parameters. The finite-sample behaviour of the new
bandwidths is investigated in Section \ref{sec:5}, and their application to develop new data-driven
nonparametric clustering methods via the mean shift algorithm is explored in Section \ref{sec:ms},
and for feature significance in Section \ref{sec:feature}. Finally, the proofs of the results are
given in an appendix.

\section{Kernel density derivative estimation}\label{sec:2}

The problem of estimating the $r$-th derivative of a multivariate density is considered in this
section. From a multivariate point of view, the $r$-th derivative of a function is understood as
the set of all its partial derivatives of order $r$, rather than just one of them. Notice that, for
instance, in a multivariate Taylor expansion of order $r$ all of the partial derivatives of order
$r$ are needed to compute the $r$-th order term. Or, in another related example, all the second
order partial derivatives are involved in the computation of the Hessian matrix.

All the $r$-th partial derivatives can be neatly organized into a single vector as follows: if $f$
is a real $d$-variate density function and $\bx=(x_1,\dots,x_d)$, denote by
$\D=\partial/\partial\bx=(\partial/\partial x_1,\dots,\partial/\partial x_d)$ the first derivative
(gradient) operator. All the second order partial derivatives can be organized into the Hessian
matrix $\mathsf Hf=(\partial^2f/\partial x_i\partial x_j)_{i,j=1}^d$, and the Hessian operator can
be formally written as $\mathsf H=\D\D^\top$ if the usual convention $(\partial/\partial
x_i)(\partial/\partial x_j) = \partial^2/( \partial x_i\partial x_j)$ is taken into account. For
$r\geq3$, however, it is not that clear how to organize the set containing all the $d^r$ partial
derivatives of order $r$. Here we adopt the unified approach used in \cite{Hol96a} or
\citet[Section 1.4]{KvR05}, where the $r$-th derivative of $f$ is defined to be the vector
$\D^{\otimes r} f= (\D f)^{\otimes r}=\partial^r f/\partial{\bx}^{\otimes r}\in\mathbb R^{d^r}$. In
the previous equation $\D^{\otimes r}$ denotes the $r$-th Kronecker power of the operator $\D$;
see, e.g., \cite{MN99} for the definition of the Kronecker product. Naturally, $\D^{\otimes 0}
f=f$, $\D^{\otimes 1}f=\D f$ and, for example, $\D^{\otimes 2}=\vec \mathsf H$, where $\vec$
denotes the operator which concatenates the columns of a matrix into a single vector.

{Here we study the problem of} estimating the $r$-th derivative $\D^{\otimes r}f$ from a sample
$\bX_1,\dots,\bX_n$ of independent and identically distributed random variables with common density
$f$. The usual kernel estimator of $f$ is defined as $\hat
f_{\bH}(\bx)=n^{-1}\sum_{i=1}^nK_\bH(\bx-\bX_i)$, where the kernel $K$ is a spherically symmetric
density function, the bandwidth $\bH$ is a symmetric positive definite matrix and
$K_{\bH}(\bx)=|\bH|^{-1/2}K(\bH^{-1/2}\bx)$. Thus, the most straightforward estimator of
$\D^{\otimes r}f$ is just the $r$-th derivative of $\hat f_{\bH}$, given by $\D^{\otimes r}\hat
f_{\bH}(\bx) =n^{-1}\sum_{i=1}^n\D^{\otimes r}K_\bH(\bx-\bX_i)$, where the roles of $K$ and $\bH$
can be separated for implementation purposes by noting that $\D^{\otimes
r}K_{\bH}(\bx)=|\bH|^{-1/2}(\bH^{-1/2})^{\otimes r}\D^{\otimes r}K(\bH^{-1/2}\bx)$, as shown in
\cite{CDW11}, where for any matrix $\mat A$ it is understood that $\mat A^{\otimes 0}=1\in\mathbb
R$ and $\mat A^{\otimes 1}=\mat A$. See, however, \cite{Jon94} for other possible estimators in the
univariate context.

For the density estimation case ($r=0$), \cite{WJ93} showed that the use of bandwidths belonging to
the class $\mathcal{I} = \lbrace h^2 \bI_d : h \in \mathbb{R} \rbrace$, with $\bI_d$ the $d\times
d$ identity matrix, or the class $\mathcal{D} = \lbrace \mathrm{diag}(h_1^2, h_2^2, \dots, h_d^2) :
h_1, h_2, \dots, h_d \in \mathbb{R} \rbrace$, may lead to dramatically less efficient estimators
than those based on bandwidth matrices drawn from $\mathcal{F}$, the space of all positive definite
symmetric matrices. Moreover \cite{CDW11} showed that the issue of efficiency loss is even more
severe for $r\geq1$. So the development of unconstrained bandwidth selectors for density derivative
estimation, which is {achieved in} this paper, may also represent an important improvement in
practice.

To measure the error committed by the kernel estimator for the sample at hand it is natural to
consider the integrated squared error (ISE), defined as
\begin{align*}
\ISE_r(\bH)&=\int_{\mathbb R^d}\|\D^{\otimes r}\hat f_{\bH}(\bx)-\D^{\otimes r}f(\bx)\|^2d\bx,
\end{align*}
where $\|\cdot\|$ denotes the usual Euclidean norm. This quantity depends on the data, so it is
common to consider the mean integrated squared error $\MISE_r(\bH)=\mathbb E[\ISE_r(\bH)]$ as a
non-stochastic measure of error, and its minimizer $\bH_{\MISE,r}=\mathrm{argmin}_{\bH\in\mathcal F}\MISE_r(\bH)$ as the non-stochastic optimal bandwidth choice. A more detailed discussion of the advantages and disadvantages of the ISE and MISE viewpoints can be found in \cite{Jon91}.

Standard calculations lead to the integrated variance plus integrated squared bias decomposition
$\MISE_r(\bH)={\rm IV}_r(\bH)+{\rm ISB}_r(\bH)$, where ${\rm IV}_r(\bH)=\int_{\mathbb
R^d}\tr\Var[\D^{\otimes r}\hat f_{\bH}(\bx)]d\bx$ and ${\rm ISB}_r(\bH)=\int_{\mathbb R^d}\|\mathbb
E[\D^{\otimes r}\hat f_{\bH}(\bx)]-\D^{\otimes r}f(\bx)\|^2d\bx$. By expanding each of these two
terms, \cite{CDW11} showed that a more explicit form of the MISE is given by
\begin{equation}\label{exactMISE}
\begin{aligned}
\MISE_r(\bH) &= \{n^{-1}|\bH|^{-1/2}\tr\big((\bH^{-1})^{\otimes r}\mathbf R(\D^{\otimes r}K)\big)- n^{-1}\tr \mat{R}^*(K_\bH*K_\bH, \D^{\otimes r} f)\}\\
&\quad +  \{\tr \mat{R}^*(K_\bH*K_\bH, \D^{\otimes r} f) - 2\tr\mat{R}^*(K_\bH, \D^{\otimes r} f)
+ \tr \mat{R}(\D^{\otimes r} f)\}
\end{aligned}
\end{equation}
where $\mat{R}(\mat{g}) = \int_{\mathbb{R}^d} \mat{g}(\bx) \mat{g}(\bx)^\top \,d\bx$, and
$\mat{R}^*(a, \mat{g}) = \int_{\mathbb{R}^d} (a*\mat{g})(\bx) \mat{g}(\bx)^\top \,d\bx$ for a
vector valued function $\mat{g}$ and a real valued function $a$, with $a*\mat{g}$ standing for a
component-wise application of the convolution operator.

Moreover, writing $m_2(K)\bI_d=\int_{\mathbb R^d}\bx\bx^\top K(\bx)d\bx$, under some
smoothness assumptions \citet{CDW11} also showed that an asymptotic approximation of the MISE is
given by
\begin{equation}\label{AMISE}
\begin{aligned}
\mathrm{AMISE}_r(\bH)&=n^{-1}|\bH|^{-1/2}\tr\big((\bH^{-1})^{\otimes r}\mathbf R(\D^{\otimes r}K)\big)\\
&\quad+\tfrac{m_2(K)^2}{4}\tr\big((\bI_{d^r}\otimes\vect\bH)\mathbf
R(\D^{\otimes(r+2)}f)(\bI_{d^r}\otimes\vec\bH)\big)
\end{aligned}
\end{equation}
and that the minimizer of this AMISE function, $\bH_{\AMISE,r} = \mathrm{argmin}_{\bH \in
\mathcal{F}} \AMISE_r(\bH)$, has entries of order $O(n^{-2/(d+2r+4)})$, leading to a minimum
achievable AMISE of order $O(n^{-4/(d+2r+4)})$.

Although these expressions provide an insightful error analysis of multivariate kernel density
derivative estimators, they are not directly implementable as software since they all involve the
unknown density $f$. In the next section, we examine strategies to estimate these unknown
quantities which lead to optimal data-based selectors for density derivative estimation.

\section{Bandwidth selection methods}\label{sec:3}

{In this section we propose three new methods to select the bandwidth matrix for kernel density
derivative estimation from the data and study their asymptotic properties. These methods are
inspired by the cross validation, plug-in and smoothed cross validation methodologies for the
estimation of the density in the univariate case, hence their names.}

\subsection{Cross validation method}
\label{sec:cv}

Cross validation (CV) techniques for bandwidth selection for univariate density estimation were
introduced in \cite{Ru82} and \cite{Bo84}, and studied in detail in seminal papers like
\cite{Ha83}, \cite{St84} and \cite{HM87}. They can be motivated in terms of either ISE-oriented or
MISE-oriented considerations.

In the case of multivariate density derivative estimation notice that, for a random variable $\bX$
having density $f$ and independent of $\bX_1,\dots,\bX_n$, using integration by parts it is
possible to write
\begin{align*}
\ISE_r(\bH)&=(-1)^r\vec^\top\bI_{d^r}\bigg\{n^{-2}\sum_{i,j=1}^n\D^{\otimes2r}K_\bH*K_\bH(\bX_i-\bX_j)-2\mathbb E\big[\D^{\otimes 2r}\hat f_{\bH}(\bX)\big]\bigg\}\\
&\quad+\tr\mat R(\D^{\otimes r}f)
\end{align*}
provided that the kernel $K$ is sufficiently smooth. The last term is irrelevant for minimizing
concerns, and the two first terms can be unbiasedly estimated by
\begin{align*}
\CV_r(\bH)&=(-1)^r\vec^\top\bI_{d^r}\bigg\{n^{-2}\sum_{i,j=1}^n\D^{\otimes2r}K_\bH*K_\bH(\bX_i-\bX_j)-2n^{-1}\sum_{i=1}^n\D^{\otimes 2r}\hat f_{\bH,i}(\bX_i)\bigg\}\\
&\hspace{-1.5cm}=(-1)^r\vec^\top\bI_{d^r}\bigg\{n^{-2}\sum_{i,j=1}^n\D^{\otimes2r}K_\bH*K_\bH(\bX_i-\bX_j)-2[n(n-1)]^{-1}\sum_{i\neq j}\D^{\otimes 2r}K_\bH(\bX_i-\bX_j)\bigg\}
\end{align*}
where $\D^{\otimes 2r}\hat f_{\bH,i}$ denotes the kernel estimator based on the sample
with the $i$-th observation deleted.

From the MISE point of view notice that, for any smooth enough function $L$,
\begin{align*}
\tr\mat R^*(L_\bH, \D^{\otimes r}f) &= \int_{\mathbb R^d}(L_\bH*\D^{\otimes r}f)(\bx)^\top\D^{\otimes r}f(\bx)d\bx \\
&=(-1)^r\vec^\top\bI_{d^r}\int_{\mathbb R^d}\D^{\otimes 2r}L_\bH*f(\bx)f(\bx)d\bx\\
&=(-1)^r\vec^\top\bI_{d^r}\mathbb E\big[\D^{\otimes 2r}L_\bH(\bX_1-\bX_2)\big],
\end{align*}
so that $\tr\mat R^*(L_\bH, \D^{\otimes r}f)$ can be unbiasedly estimated by
$$(-1)^r[n(n-1)]^{-1}\vec^\top\bI_{d^r}\sum_{i\neq j}\D^{\otimes 2r}L_\bH(\bX_i-\bX_j).$$
Therefore, in view of (\ref{exactMISE}), $\MISE_r(\bH)-\tr\mat R(\D^{\otimes r}f)$ can be
unbiasedly estimated by
\begin{align*}
\CV_r(\bH)&=n^{-1}|\bH|^{-1/2}\tr\big((\bH^{-1})^{\otimes r}\mat
R(\D^{\otimes r}K)\big)\\&\quad+(-1)^r[n(n-1)]^{-1}\vec^\top\bI_{d^r}\sum_{i\neq j}^n\Big\{(1-n^{-1})\D^{\otimes 2r}\bar{K}_\bH-2\D^{\otimes 2r}K_\bH\Big\}(\bX_i-\bX_j),
\end{align*}
where $\bar{K}=K*K$. To check that these two CV expressions coincide, take into account that
$\D^{\otimes 2r}K_\bH*K_\bH=\D^{\otimes 2r}(K_\bH*K_\bH)=\D^{\otimes 2r}(K*K)_\bH = \D^{\otimes 2r}
\bar{K}_\bH$, so that using some properties of the Kronecker product and the vec operator
\citep[see, e.g.,][]{MN99}, the sum of the diagonal terms in the first $\CV_r(\bH)$ expression
equals
\begin{align*}
(-1)^rn^{-1}\vec^\top\bI_{d^r}\D^{\otimes2r}\bar{K}_\bH(0)&=(-1)^rn^{-1}|\bH|^{-1/2}\vec^\top\bI_{d^r}(\bH^{-1/2})^{\otimes 2r}\D^{\otimes2r}
\bar{K}(0)\\
&=(-1)^rn^{-1}|\bH|^{-1/2}\vec^\top(\bH^{-1})^{\otimes r}\D^{\otimes2r}\bar{K}(0)\\
&=n^{-1}|\bH|^{-1/2}\vec^\top(\bH^{-1})^{\otimes r}\vec\mat R(\D^{\otimes r}K)\\
&=n^{-1}|\bH|^{-1/2}\tr\big((\bH^{-1})^{\otimes r}\mat
R(\D^{\otimes r}K)\big)
\end{align*}
where the third line follows by noting that $\vec\mat R(\D^{\otimes r}K)=(-1)^r\int_{\mathbb
R^d}\D^{\otimes 2r}K(\bx)K(\bx)d\bx=(-1)^r\D^{\otimes 2r}\bar{K}(0)$.

{Surely} the simplest formulation for CV (useful for implementation purposes) is
\begin{align*}
\CV_r(\bH)&=(-1)^r|\bH|^{-1/2}\vec^\top(\bH^{-1})^{\otimes r}\bigg\{n^{-2}\sum_{i,j=1}^n\D^{\otimes2r}\bar{K}\big(\bH^{-1/2}(\bX_i-\bX_j)\big)\\
&\quad-2[n(n-1)]^{-1}\sum_{i\neq j}\D^{\otimes 2r}K\big(\bH^{-1/2}(\bX_i-\bX_j)\big)\bigg\}.
\end{align*}
We denote by $\hat \bH_{\CV,r}$ the bandwidth matrix in $\mathcal F$ which minimizes ${\rm
CV}_r(\bH)$.

\subsection{Plug-in method}
\label{sec:plugin}

Plug-in (PI) bandwidth selection techniques are based on estimating the unknown quantities that
appear in an asymptotic error formula and minimizing the resulting empirical criterion. Basic
plug-in selectors for univariate density estimation are described in \cite{PM90} and \cite{SJ91}.
In the multivariate case, introducing the vector integrated density derivative functional, defined
as
\begin{align*}
\bpsi_{2r} = \int \D^{\otimes 2r} f(\bx) f(\bx) \, d\bx
&= (-1)^r \vec \mat{R} (\D^{\otimes r} f),
\end{align*}
allows us to rewrite the AMISE formula (\ref{AMISE}) for the $r$-th derivative as
\begin{align*}
\mathrm{AMISE}_r(\bH)&=n^{-1}|\bH|^{-1/2}\tr\big((\bH^{-1})^{\otimes r}\mathbf R(\D^{\otimes r}K)\big) \\
&\quad +(-1)^r \tfrac{m_2(K)^2}{4}\boldsymbol{\psi}_{2r+4}^\top\big(\vec\bI_{d^r}\otimes (\vec\bH)^{\otimes 2}\big).
\end{align*}
Thus, the plug-in bandwidth selector $\hat\bH_{\PI,r}$ is defined to be the bandwidth in $\mathcal
F$ minimizing the criterion
\begin{align*}
\mathrm{PI}_r(\bH)&=n^{-1}|\bH|^{-1/2}\tr\big((\bH^{-1})^{\otimes r}\mathbf R(\D^{\otimes r}K)\big) \\
&\quad + (-1)^r \tfrac{m_2(K)^2}{4} \hat\bpsi{}_{2r+4}^\top\big(\vec\bI_{d^r}\otimes (\vec\bH)^{\otimes 2}\big),
\end{align*}
where $\hat\bpsi_{2r+4}$ is a suitable estimator of $\bpsi_{2r+4}$.

\cite{CD10} analyzed the problem of estimating $\bpsi_{2r}$ for an arbitrary $r$. Noting that $\bpsi_{2r} = \E [ \D^{\otimes 2r} f(\bX)]$, they
proposed the kernel estimator
$$
\hat{\bpsi}_{2r}(\bG) = n^{-1} \sum_{i=1}^n \D^{\otimes 2r} \hat{f}_{n\bG} (\bX_i)
= n^{-2} \sum_{i,j=1}^n \D^{\otimes 2r} L_\bG (\bX_i - \bX_j),
$$
using a kernel $L$ with pilot bandwidth $\bG$, possibly different from $K$ and $\bH$. For the
selection of the pilot bandwidth matrix $\bG$, the same authors showed that the leading term of the
mean squared error $\E\big[ \| \hat{\boldsymbol{\psi}}_{2r} (\bG) - \boldsymbol{\psi}_{2r}
\|^2\big]$ is given by the squared norm of the asymptotic bias vector
\begin{equation}
\label{eq:omega_pi}
\boldsymbol{\omega}_{\PI,2r}(\bG) = n^{-1} |\bG|^{-1/2} (\bG^{-1/2})^{\otimes 2r} \D^{\otimes 2r} L(0)
+ \tfrac{m_2(L)}{2} (\vect \bG \otimes \bI_{d^{2r}}) \boldsymbol{\psi}_{2r+2},
\end{equation}
so that the asymptotically optimal choice of the pilot bandwidth for the estimation of $\bpsi_{2r}$ is $\bG_{\PI,2r} = \mathrm{argmin}_{\bG\in \mathcal{F}} \lVert
\boldsymbol{\omega}_{\PI,2r}(\bG) \lVert^2$, which depends on $\bpsi_{2r+2}$.

Hence, to select the pilot bandwidth $\bG$ from the data we could substitute $\bpsi_{2r+2}$ by
another kernel estimator in (\ref{eq:omega_pi}) and minimize the squared norm of the resulting
vector, but of course then the optimal bandwidth for the kernel estimator of $\psi_{2r+2}$ depends
on $\bpsi_{2r+4}$, and so on. The usual strategy to overcome this problem is to use an $m$-stage
algorithm as described in \cite{CD10}, involving $m$ successive kernel functional estimations with
the initial bandwidth matrix chosen on the basis of a normal scale approximation. The resulting
bandwidth obtained by minimizing $\PI_r(\bH)$ when $\bpsi_{2r+4}$ is estimated using an $m$-stage
algorithm is called an $m$-stage plug-in bandwidth selector for the $r$-th derivative.

\color{black}

\subsection{Smoothed cross validation method}
\label{sec:scv}

The smoothed cross validation (SCV) methodology for univariate density estimation was introduced in
\cite{HMP92}, and a thorough study of this technique was made in \cite{JMP91}. However, it has not
been until recently that its multivariate counterpart has been developed, in \cite{DH05b} and
\cite{CD11}, and its use for univariate density derivative estimation has been explored
\citep[see][]{DR10}.

A possible derivation of the SCV criterion {for the problem of multivariate density derivative
estimation} is based on the approximation of the MISE obtained by replacing the exact integrated
variance in equation (\ref{exactMISE}) by its asymptotic approximation (the first term), while
keeping the exact form for the integrated squared bias, so that $\MISE_r(\bH)\simeq\MISE2_r(\bH)$
with
\begin{align*}
\MISE2_r(\bH) &= n^{-1}|\bH|^{-1/2}\tr\big((\bH^{-1})^{\otimes r}\mathbf R(\D^{\otimes r}K)\big)
+  \tr \mat{R}^*( \bar{\Delta}_\bH, \D^{\otimes r}f),
\end{align*}
where $\Delta_\bH = K_\bH - K_0$ (here $K_0$ denotes the Dirac delta function) and
$\bar{\Delta}_\bH = \Delta_\bH
* \Delta_\bH = \bar{K}_\bH - 2K_\bH + K_0$. The
SCV criterion is obtained by replacing the unknown target $\D^{\otimes r} f$ in the MISE2 formula
with a pilot estimator $\D^{\otimes r} \hat{f}_{\bG}(\bx)=n^{-1}\sum_{i=1}^n\D^{\otimes
r}L_\bG(\bx-\bX_i)$, leading to
\begin{align*}
\mathrm{SCV}_r (\bH)
&=n^{-1}|\bH|^{-1/2}\tr\big((\bH^{-1})^{\otimes r}\mathbf R(\D^{\otimes r}K)\big) \\
&\quad+ (-1)^r n^{-2} (\vect \bI_{d^r})
\sum_{i,j=1}^n \bar{\Delta}_\bH * \D^{\otimes 2r} \bar{L}_\bG (\bX_i - \bX_j),
\end{align*}
where $\bar L=L*L$. When all the $\bX_i$ are distinct and the diagonal terms ($i=j$) are omitted in
the previous sum it can be shown, using the properties of the Dirac delta function \citep[see,
e.g.,][Chapter I.2]{GS64}, that the SCV criterion coincides with the CV criterion for $\bG=0$.

The minimizer of $\SCV_r(\bH)$ is defined to be $\hat{\bH}_{\SCV,r}$. Its value depends on the
pilot selector $\bG$. \citet{CD11} showed that in the case $r=0$ the leading term of the mean
squared error $\E \| \vec (\hat{\bH}_{\SCV,r} - \bH_{\MISE,r}) \|^2$ is given by the squared norm
$\lVert \boldsymbol{\omega}_{\SCV,2r+4}(\bG) \lVert^2$ where $\boldsymbol{\omega}_{\SCV,2r+4}(\bG)$
is the same as the aforementioned $\boldsymbol{\omega}_{\PI,2r+4}(\bG)$ except that $L$ is replaced
by $\bar{L}$. Thus it is straightforward to define, analogously to the plug-in algorithm, the
required optimal $k$-th stage pilot bandwidth of an $m$-stage SCV selector.

\subsection{Convergence results}\label{sec:4}

Let $\hat{\bH}_r = \mathrm{argmin}_{\bH \in \mathcal{F}} \widehat{\MISE}_r(\bH)$ be an arbitrary
data-based bandwidth selector, built up on the basis of an estimated criterion
$\widehat{\MISE}_r(\bH)$. Following \citet{DH05a}, $\hat{\bH}_r$ is said to converge to
$\bH_{\MISE,r}$ at relative rate $n^{-\alpha}$ if
$$
\vec (\hat{\bH} - \bH_{\MISE,r}) = O_P(n^{-\alpha} \mat{J}_{d^2}) \vec \bH_{\MISE,r}
$$
where $O_P$ denotes element-wise order in probability and $\mat{J}_{d^2}$ is the $d^2\times d^2$
matrix of ones. This order in probability statement can be difficult to derive directly. The next
lemma provides a more tractable indirect method of calculating convergence rates.

\begin{lemma}
\label{lem:asymHr} Suppose that assumptions (A1)--(A3) given in the appendix hold. The discrepancy
$\vec(\hat\bH_r-\bH_{\MISE,r})$ is asymptotically equivalent to
$\D_\bH[\widehat{\mathrm{MISE}}_r-\mathrm{MISE}_r](\bH_{\MISE,r})$, where $\D_\bH$ is shorthand for
$\partial/\partial\vec\bH$. Furthermore, the relative rate of convergence of $\hat\bH_r$ is
$n^{-\alpha}$ if
\begin{align*}
\E \lbrace \D_\bH[&\widehat{\mathrm{MISE}}_r-\mathrm{MISE}_r](\bH_{\MISE,r}) \rbrace
\E \lbrace \D_\bH[\widehat{\mathrm{MISE}}_r-\mathrm{MISE}_r](\bH_{\MISE,r}) \rbrace^\top \\
&+ \Var \lbrace \D_\bH[\widehat{\mathrm{MISE}}_r-\mathrm{MISE}_r](\bH_{\MISE,r}) \rbrace
= O(n^{-2\alpha} \mat{J}_{d^2}) \vec \bH_{\MISE,r}  \vect \bH_{\MISE,r}.
\end{align*}
\end{lemma}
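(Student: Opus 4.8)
The plan is to treat $\hat\bH_r$ and $\bH_{\MISE,r}$ as stationary points of their respective criteria and to relate their difference through a first-order Taylor expansion of the empirical gradient, following the M-estimation template of \citet{DH05a}. First I would record the two first-order conditions $\D_\bH\widehat{\MISE}_r(\hat\bH_r)=0$ and $\D_\bH\MISE_r(\bH_{\MISE,r})=0$, which hold because each bandwidth is an interior minimizer of its criterion over the open set $\mathcal F$ (identifying $\bH$ with $\vec\bH$, symmetry being handled by the usual half-vectorization bookkeeping). Expanding $\D_\bH\widehat{\MISE}_r$ about $\bH_{\MISE,r}$ and evaluating at $\hat\bH_r$ gives
\begin{align*}
0 &=\D_\bH\widehat{\MISE}_r(\hat\bH_r) \\
&=\D_\bH\widehat{\MISE}_r(\bH_{\MISE,r})
+\D_\bH\D_\bH^\top\widehat{\MISE}_r(\bH_{\MISE,r})\,
\vec(\hat\bH_r-\bH_{\MISE,r})+R_n,
\end{align*}
where $\D_\bH\D_\bH^\top$ denotes the Hessian in $\vec\bH$ and $R_n$ is a second-order remainder. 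Since $\D_\bH\MISE_r(\bH_{\MISE,r})=0$, the leading term is exactly $\D_\bH[\widehat{\MISE}_r-\MISE_r](\bH_{\MISE,r})$, so solving for the discrepancy yields
\begin{align*}
\vec(\hat\bH_r-\bH_{\MISE,r})
&=-\big[\D_\bH\D_\bH^\top\widehat{\MISE}_r(\bH_{\MISE,r})\big]^{-1} \\
&\quad\times\D_\bH[\widehat{\MISE}_r-\MISE_r](\bH_{\MISE,r})\,(1+o_P(1)).
\end{align*}

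Next I would show that the random Hessian $\D_\bH\D_\bH^\top\widehat{\MISE}_r(\bH_{\MISE,r})$ converges in probability to the deterministic, nonsingular Hessian $\D_\bH\D_\bH^\top\MISE_r(\bH_{\MISE,r})$ at the minimizer, and that the consistency of $\hat\bH_r$ makes the remainder $R_n$ negligible relative to the leading term; this is precisely where assumptions (A1)--(A3) are used. Replacing the random Hessian by its deterministic limit leaves a fixed nonsingular linear map acting on $\D_\bH[\widehat{\MISE}_r-\MISE_r](\bH_{\MISE,r})$. Because the entries of $\bH_{\MISE,r}$ all share the common order $n^{-2/(d+2r+4)}$ recalled in Section \ref{sec:2}, such a fixed map preserves element-wise stochastic order relative to $\vec\bH_{\MISE,r}$; hence the discrepancy and the gradient difference are asymptotically equivalent in the required sense, which establishes the first assertion.

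For the relative-rate statement I would transport the order through this equivalence. Writing $\bxi_n=\D_\bH[\widehat{\MISE}_r-\MISE_r](\bH_{\MISE,r})$, its second-moment matrix $\E[\bxi_n\bxi_n^\top]=\E[\bxi_n]\E[\bxi_n]^\top+\Var[\bxi_n]$ is exactly the object appearing in the hypothesis. A component-wise Markov/Chebyshev bound then converts control of its $(i,j)$ entries at order $n^{-2\alpha}(\vec\bH_{\MISE,r})_i(\vec\bH_{\MISE,r})_j$ into $\bxi_n=O_P(n^{-\alpha}\mat{J}_{d^2})\vec\bH_{\MISE,r}$ element-wise, and through the fixed Hessian-inverse map this yields $\vec(\hat\bH_r-\bH_{\MISE,r})=O_P(n^{-\alpha}\mat{J}_{d^2})\vec\bH_{\MISE,r}$, i.e.\ relative rate $n^{-\alpha}$ in the stated sense.

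The main obstacle I anticipate is the middle step: verifying that the empirical Hessian converges to a nonsingular limit and that the Taylor remainder is asymptotically negligible. This needs a preliminary (relative) consistency result for $\hat\bH_r$ together with a locally uniform law of large numbers for the second and third derivatives of $\widehat{\MISE}_r$, which is exactly what assumptions (A1)--(A3) are engineered to supply. The accompanying, bookkeeping-heavy difficulty is to track the Kronecker and $\vec$ structure throughout, so that the deterministic prefactor does not disturb the $\mat{J}_{d^2}$ and $\vec\bH_{\MISE,r}\vect\bH_{\MISE,r}$ scalings that encode the relative rate.
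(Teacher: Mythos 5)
Your route is essentially the paper's: both arguments rest on the M-estimation expansion of \cite{DH05a}, writing $\vec(\hat\bH_r-\bH_{\MISE,r})$ as minus the inverse MISE Hessian applied to $\D_\bH[\widehat{\MISE}_r-\MISE_r](\bH_{\MISE,r})$, and then transferring element-wise orders; your Markov/Chebyshev step for the second assertion is exactly the (unstated) argument the paper intends. Where you diverge is in how the Hessian prefactor is neutralized, and this is also where your plan stops short. You propose to show that the \emph{random} Hessian $\D_\bH\D_\bH^\top\widehat{\MISE}_r(\bH_{\MISE,r})$ converges to a ``fixed nonsingular'' limit via a locally uniform law of large numbers, and you rightly flag this as the main obstacle --- but you do not carry it out, and without it your claim that a ``fixed map preserves element-wise stochastic order relative to $\vec\bH_{\MISE,r}$'' has no support: a Hessian with degenerating or exploding entries would destroy the stated relative rate. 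The paper avoids any stochastic analysis of the Hessian altogether. It cites Lemma~1 of \cite{DH05a} for the equivalence involving the \emph{deterministic} Hessian $\D_\bH^2\MISE_r(\bH_{\MISE,r})$, so that the only remaining task is the purely deterministic bound $\D_\bH^2\MISE_r(\bH_{\MISE,r})=O(\mat{J}_{d^2})$. This is obtained by noting that for bandwidths with entries of the optimal order $n^{-2/(d+2r+4)}$ one may replace $\MISE_r$ by $\AMISE_r$ (by the results of \cite{CDW11}), and then computing directly that the Hessian of the asymptotic squared-bias term $\bpsi_{2r+4}^\top\big(\vec\bI_{d^r}\otimes(\vec\bH)^{\otimes 2}\big)$ is $O(\mat{J}_{d^2})$, with a similar bound for the asymptotic variance term. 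That short deterministic computation is precisely what would make your ``fixed nonsingular linear map'' assertion rigorous, and it is simpler than the uniform-LLN machinery you anticipate needing; with it supplied, your proposal matches the paper's proof.
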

The convergence rates of the three bandwidth selectors considered here are given in the following
theorem, whose proof is deferred to the appendix.

\begin{theorem}\label{ReRoC}
Suppose that assumptions (A1)--(A5) given in the appendix hold. The relative rate of convergence to
$\bH_{\MISE,r}$ is $n^{-d/(2d+4r+8)}$ for the cross validation selector $\hat{\bH}_{\CV,r}$, and
$n^{-2/(d+2r+6)}$ for the plug-in selector $\hat{\bH}_{\PI,r}$ and the smoothed cross validation
selector $\hat{\bH}_{\SCV,r}$ when $d\geq2$.
\end{theorem}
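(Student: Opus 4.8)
The plan is to invoke Lemma~\ref{lem:asymHr} for each of the three selectors, reducing the problem in every case to computing the expectation and variance of the gradient $\D_\bH[\widehat{\MISE}_r-\MISE_r](\bH_{\MISE,r})$ and reading off $\alpha$ from the order condition. It is convenient to set $\beta=2/(d+2r+4)$, so that, by the stated order of the entries of $\bH_{\MISE,r}$, one has $\bH_{\MISE,r}\asymp n^{-\beta}\bI_d$, $|\bH_{\MISE,r}|^{-1/2}\asymp n^{d\beta/2}$ and $\bH_{\MISE,r}^{-1}\asymp n^{\beta}\bI_d$; the right-hand side of the order condition in Lemma~\ref{lem:asymHr} is then of entrywise order $n^{-2\alpha}n^{-2\beta}$. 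Assumptions (A1)--(A5) supply the smoothness and moment conditions needed to differentiate under the expectation, to justify the leading-order expansions below, and to apply Lemma~\ref{lem:asymHr}.

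For the cross validation selector I would first observe that $\E[\CV_r(\bH)]=\MISE_r(\bH)-\tr\mat R(\D^{\otimes r}f)$ for every $\bH$, whence $\E\{\D_\bH[\CV_r-\MISE_r](\bH_{\MISE,r})\}=0$ identically and the rate is controlled purely by the variance. Differentiating the double-sum form of $\CV_r$ yields a $V$-statistic with an $\bH$-dependent kernel, whose variance I would split into a H\'ajek-projection part of order $n^{-1}$ and a degenerate part of order $n^{-2}$. Carrying out the change of variables $\by=\bH^{-1/2}\bx$ after the $\D^{\otimes 2r}$ action and the differentiation in $\vec\bH$ shows that the degenerate part scales, in the relevant norm, like $n^{-2}|\bH|^{-1/2}(\bH^{-1})^{\otimes(2r+2)}$, and this is the term that governs the rate at the bandwidth scale $\bH_{\MISE,r}$. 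Substituting $\bH_{\MISE,r}\asymp n^{-\beta}\bI_d$ gives order $n^{-2+\beta(d+4r+4)/2}$, and equating exponents with $n^{-2\alpha-2\beta}$ yields $\alpha=d/(2d+4r+8)$.

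For the plug-in selector, the only stochastic ingredient of $\PI_r(\bH)$ is $\hat\bpsi_{2r+4}$, so that the stochastic part of $\D_\bH[\PI_r-\MISE_r](\bH_{\MISE,r})$ is, to leading order, a deterministic matrix of order $n^{-\beta}$ (the gradient of $\vec\bI_{d^r}\otimes(\vec\bH)^{\otimes2}$, linear in $\bH$) applied to the estimation error $\hat\bpsi_{2r+4}-\bpsi_{2r+4}$; consequently both the squared-expectation and the variance terms of Lemma~\ref{lem:asymHr} reduce, up to the common factor $n^{-2\beta}$, to the mean squared error $\E\|\hat\bpsi_{2r+4}-\bpsi_{2r+4}\|^2$. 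Using the bias vector (\ref{eq:omega_pi}) of \cite{CD10} with $2r$ replaced by $2r+4$, the diagonal contribution $n^{-1}|\bG|^{-1/2}(\bG^{-1/2})^{\otimes(2r+4)}\D^{\otimes(2r+4)}L(0)$ and the smoothing contribution $\tfrac{m_2(L)}{2}(\vect\bG\otimes\bI_{d^{2r+4}})\bpsi_{2r+6}$ are balanced by the optimal $m$-stage pilot at $\bG\asymp n^{-2/(d+2r+6)}\bI_d$, giving $\E\|\hat\bpsi_{2r+4}-\bpsi_{2r+4}\|^2\asymp n^{-4/(d+2r+6)}$ and hence $\alpha=2/(d+2r+6)$. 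For the smoothed cross validation selector the argument is identical: the stochastic part of $\SCV_r-\MISE_r$ is again the same linear functional of a pilot estimation error, now constructed from $\bar L=L*L$ in place of $L$, as encoded in $\boldsymbol\omega_{\SCV,2r+4}(\bG)$, and since this replacement does not alter the orders in the bias/variance analysis the optimal pilot delivers the same rate $n^{-2/(d+2r+6)}$.

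The main obstacle is the variance computation for $\CV_r$: its gradient is a degenerate $U$-statistic whose variance carries several competing contributions in different powers of $n$ and of $\bH$, and the delicate step is to track, through the Kronecker bookkeeping of the $(\bH^{-1})^{\otimes r}$ factors and the $\vect\bI_{d^r}$ contractions, which one dominates at the MISE-optimal scale. A second point requiring care is the dimensional threshold in the plug-in and SCV cases: since $\bH_{\MISE,r}$ minimizes $\MISE_r$ one has $\D_\bH\MISE_r(\bH_{\MISE,r})=0$, so the deterministic part of the expectation term equals $\D_\bH\AMISE_r(\bH_{\MISE,r})$, and one must verify that for $d\geq2$ this deterministic discrepancy is dominated by the functional-estimation error analysed above, the stated rate ceasing to be tight precisely when $d=1$.
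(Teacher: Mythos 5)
Your strategy coincides with the paper's own proof: Lemma~\ref{lem:asymHr} reduces each rate to the bias and variance of $\D_\bH[\widehat{\MISE}_r-\MISE_r](\bH_{\MISE,r})$; for CV the exact unbiasedness $\E[\CV_r(\bH)]=\MISE_r(\bH)-\tr\mat{R}(\D^{\otimes r}f)$ kills the bias term and the $U$-statistic variance is governed by its degenerate part, whose scaling $n^{-2}|\bH|^{-1/2}(\bH^{-1})^{\otimes(2r+2)}$ agrees with the paper's $2n^{-2}\mat{\Xi}_2$ where $\mat{\Xi}_2=O(\mat{J}_{d^2}|\bH|^{-1/2})\vec\bH^{\otimes -(r+1)}\vect\bH^{\otimes-(r+1)}$; for PI the gradient is a linear map of order $n^{-2/(d+2r+4)}$ applied to $\hat\bpsi_{2r+4}(\bG)-\bpsi_{2r+4}$, whose MSE under the optimal pilot is $O(n^{-4/(d+2r+6)})$; and SCV is handled as PI with $\bar L$ in place of $L$. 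Your exponent arithmetic is correct in all three cases, and what you flag as ``the main obstacle'' is exactly what the paper's Lemma~\ref{lem:varphi}, the Taylor expansions of $\mat{\Xi}_0,\mat{\Xi}_1,\mat{\Xi}_2$, and (for SCV) the preliminary replacement of $\MISE_r$ by $\MISE2_r$ supply; these are computations rather than missing ideas.

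The genuine flaw is your diagnosis of the restriction $d\geq2$. The deterministic discrepancy you propose to control, $\D_\bH[\AMISE_r-\MISE_r](\bH_{\MISE,r})$, has relative order at most $n^{-2/(d+2r+4)}$, which is $o(n^{-2/(d+2r+6)})$ for \emph{every} $d\geq1$; your proposed verification would therefore go through in all dimensions and can never produce a threshold at $d=1$. The dimension restriction actually lives inside the step you treat as uniform in $d$, namely the claim that $\E\|\hat\bpsi_{2r+4}(\bG)-\bpsi_{2r+4}\|^2$ is of exact order $n^{-4/(d+2r+6)}$. When $d=1$ the bias $\boldsymbol{\omega}_{\PI,2r+4}(G)$ is a scalar whose two terms, $n^{-1}G^{-(r+5/2)}L^{(2r+4)}(0)$ and $\tfrac{m_2(L)}{2}G\,\psi_{2r+6}$, have opposite signs (for Gaussian-type $L$ one has $\sgn L^{(2r+4)}(0)=(-1)^{r}$, while $\psi_{2r+6}=(-1)^{r+3}\int (f^{(r+3)})^2$ has sign $(-1)^{r+1}$), so a suitable scalar pilot makes the leading bias vanish exactly, the functional MSE improves beyond $n^{-4/(2r+7)}$, and the selector converges at the faster rate $n^{-(2+\min\{2,d/2\})/(d+2r+6)}$ that the paper records for the class $\mathcal{I}$. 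For $d\geq2$ with unconstrained $\bG$ such cancellation of a $d^{2r+4}$-dimensional bias vector using only $d(d+1)/2$ free pilot parameters is impossible in general, and that---not the AMISE/MISE discrepancy---is why $n^{-2/(d+2r+6)}$ is the correct rate precisely when $d\geq2$. This is the ``very special cancellation'' the paper invokes in the discussion following Theorem~\ref{ReRoC}.
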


\cite{Jon92} computed the relative rate of convergence for the CV and PI selectors for the
estimation of a single partial derivative, using a single-parameter bandwidth matrix (i.e.,
$\bH\in\mathcal I$). The previous theorem shows that the unconstrained CV bandwidth attains the
same rate as its constrained counterpart, yet with added flexibility that should be captured in the
constant coefficient of the asymptotic expression, although the computation of an explicit form for
this coefficient does not seem possible in general.

The convergence rate of the PI selector is $n^{-(2+\min\{2,d/2\})/(d+2r+6)}$ within the
single-parameter bandwidth class $\mathcal I$, yielding a slightly faster convergence to the
optimal constrained bandwidth. As explained in \cite{CD10,CD11} for the density case, this is due
to the fact that the very special cancellation in the bias term which is achievable when using a
single-parameter bandwidth is not possible in general for the unconstrained estimator.
Nevertheless, the aforementioned papers showed that this slight loss in convergence rate terms is
negligible in practice as compared with the fact that the targeted constrained optimal bandwidth is
usually much less efficient than the unconstrained one (see also Section \ref{sec:5} below).

Theorem \ref{ReRoC} also shows that the similarities noted in \cite{CD11} about the asymptotic
properties of the PI and SCV methods for the density estimation problem persist for $r>0$, since
both selectors exhibit the same relative rate of convergence.

\citet[][p. 406]{JMS96} exemplified how slow is the rate $n^{-1/10}$ of the CV selector for $d=1$,
$r=0$ by noting that $n$ has to be as large as $10^{10}=10,000,000,000$ so that $n^{-1/10}=0.1$. In
the same spirit, to compare the rates obtained in Theorem \ref{ReRoC}, Table \ref{rates-ratio}
shows the values of $n^{-d/(2d+4r+8)}$ (CV) and $n^{-2/(d+2r+6)}$ (PI and SCV) divided by
$1000^{-1/6}$, that is the rate for the CV selector $n=1000, d=2, r=0$ which is used as a base
case, for all the different combinations of $n=10^3, 10^4, 10^5$, $d=2,3,4,5$ and $r=0,1,2$. Ratios
which are lower than 1 indicate the rate is faster than the base case, and ratios greater than 1 a
slower rate. For $n=10^3$, these ratios in Table \ref{rates-ratio} tend to be greater than 1,
indicating that using this sample size will lead to a deteriorating convergence rate. On the other
hand for the larger sample sizes, $n=10^4, 10^5$, these ratios tend to be less than 1. This implies
that convergence rates better than the CV selector for bivariate density estimation can be
attained, even with higher dimensions and higher order derivatives, provided that sufficiently
large (although still realistic) sample sizes are used. Of course this comparison only takes into
account the asymptotic order of the convergence rates by ignoring the associated coefficients since
explicit formulas for the latter are not available for $d\geq 2$. The finite sample behaviour of
the bivariate case for moderate sample sizes is examined more closely in the next section.

\begin{table}[t]
\centering
\begin{tabular}{llc@{}cc@{}cc@{}cc@{}c}
\hline
        & & \multicolumn{2}{c}{$d=2$} & \multicolumn{2}{c}{$d=3$} & \multicolumn{2}{c}{$d=4$} & \multicolumn{2}{c}{$d=5$}  \\
\hline
        & & CV & PI/SCV & CV & PI/SCV & CV & PI/SCV & CV & PI/SCV \\
$r=0$
& $n=10^3$ & 1.000 & 0.562 & 0.720 & 0.681 & 0.562 & 0.794 & 0.464 & 0.901 \\
& $n=10^4$ & 0.681 & 0.316 & 0.439 & 0.408 & 0.316 & 0.501 & 0.245 & 0.593 \\
& $n=10^5$ & 0.464 & 0.178 & 0.268 & 0.245 & 0.178 & 0.316 & 0.129 & 0.390 \\
        \hline
$r=1$
& $n=10^3$ & 1.334 & 0.794 & 1.000 & 0.901 & 0.794 & 1.000 & 0.658 & 1.093 \\
& $n=10^4$ & 1.000 & 0.501 & 0.681 & 0.593 & 0.501 & 0.681 & 0.390 & 0.767 \\
& $n=10^5$ & 0.750 & 0.316 & 0.464 & 0.390 & 0.316 & 0.464 & 0.231 & 0.538 \\
        \hline
$r=2$
& $n=10^3$ & 1.585 & 1.000 & 1.233 & 1.093 & 1.000 & 1.179 & 0.838 & 1.259 \\
& $n=10^4$ & 1.259 & 0.681 & 0.901 & 0.767 & 0.681 & 0.848 & 0.538 & 0.926 \\
& $n=10^5$ & 1.000 & 0.464 & 0.658 & 0.538 & 0.464 & 0.611 & 0.346 & 0.681\\
       \hline
 \end{tabular}
\caption{Comparison of the relative rate of convergence for the CV, PI and SCV selectors. For each
combination of $r$, $n$ and $d$ in the table, the left entry in the corresponding cell shows
$n^{-d/(2d+4r+8)}$ (CV selector) and the right entry $n^{-2/(d+2r+6)}$ (PI and SCV selectors)
divided by $1000^{-1/6}$ (i.e. the rate for the CV selector with $n=1000, d=2, r=0$).}
\label{rates-ratio}
\end{table}
\color{black}

\section{Numerical study}\label{sec:5}

\subsection{Data-based algorithms}
For most practical implementations the normal kernels are used, i.e. $K=L=\phi$. For $d \times d$
symmetric matrices $\mat A, \mat B$, and for $r, s \geq 0$, let
$$\eta_{2r,2s}(\bx; \mat A, \mat B, \bSigma) = [(\vec^T \mat A)^{\otimes r} \otimes
(\vec^T \mat B)^{\otimes s}] \D^{\otimes 2r+2s} \phi_{\bSigma} (\bx)$$
and write, for short, $\eta_{2r}(\bx; \bSigma) = \eta_{2r,0}(\bx; \bI_d, \bI_d, \bSigma)$ and $\nu_r(\bSigma)=(-1)^r\eta_{2r}(0;\bSigma)/\phi_\bSigma(0)$.

Then the cross validation criterion can be rewritten as
\begin{align*}
\CV_r(\bH)&=(-1)^r \bigg\{n^{-2}\sum_{i,j=1}^n \eta_{2r} (\bX_i-\bX_j;  2\bH)
-2[n(n-1)]^{-1}\sum_{i\neq j} \eta_{2r} (\bX_i-\bX_j; \bH) \bigg\}.
\end{align*}
Besides, the data-based $m$-stage selection algorithm for plug-in selectors is given by:
\begin{enumerate}
\item Initialize the $m$-th stage pilot selector to be the normal reference selector
$$\displaystyle \hat{\bG}_{\PI,2r+2m+2} = \bigg( \frac{2}{2r+2m+d+2} \bigg)^{2/(2r+2m+d+4)} 2 \mat S n^{-2/(2r+2m+d+4)},$$
from \citet{CDW11}, where
$\mat{S}$ is the sample variance of $\bX_1,\dots \bX_n$.
\item For $k=m-1, m-2, \dots, 1$, the optimal $k$-the stage pilot selector $\hat{\bG}_{\PI,
    2r+2k+2}$ is the minimizer of
\begin{align*}
\lVert \hat{\boldsymbol{\omega}}_{\PI,2r+2k+2} (\bG)\lVert^2 &= n^{-2} |\bG|^{-1} (2\pi)^{-d} \mathrm{OF}(2r+2k+2) \nu_{r+k+1}(\bG^{-2}) \\
 &+ (-1)^{r+k+1} (2\pi)^{-d/2} \mathrm{OF}(2r+2k+2)
|\bG|^{-1/2} n^{-3}\\&\quad\times\sum_{i,j=1}^n \eta_{2,2r+2k+2}(\bX_i - \bX_j; \bG, \bG^{-1}, \hat{\bG}_{\PI,2r+2k+4}) \\
 &+  \tfrac{1}{4} n^{-4} \Big[ \sum_{i,j=1}^n \eta_{2,2r+2k+2} (\bX_i - \bX_j;\bG, \bI_d, \hat{\bG}_{\PI,2r+2k+4}) \Big]^2,
\end{align*}
where ${\rm OF}(2p)=(2p-1)(2p-3)\cdots5\cdot3\cdot1$ for $p\in\mathbb N$. The numerical
minimization over the class of positive-definite matrices is carried out as described in detail
in \citet[][Section 5.1]{DH05b}.

\item The plug-in selector $\hat{\bH}_{\PI,r}$ is the minimizer of
\begin{multline*}
\mathrm{PI}_r(\bH) = n^{-1}|\bH|^{-1/2} 2^{-(d+r)} \pi^{-d/2} \nu_r(\bH^{-1})\\
+ (-1)^r (2n)^{-2}\sum_{i,j=1}^n \eta_{2, 2r}(\bX_i - \bX_j; \bH, \bI_d, \hat{\bG}_{\PI, 2r+4}).
\end{multline*}
\end{enumerate}
The derivations of $\lVert \hat{\boldsymbol{\omega}}_{\PI,2r+2k+2} (\bG)\lVert^2$ and
$\mathrm{PI}_r(\bH)$ in the $\eta$ functional form can be found in \citet{CD12}. There it is also
shown that, although it appears that these are less concise than the previous expressions, they
facilitate efficient computation, both in terms of memory and execution time.

We observe that $\lVert \boldsymbol{\omega}_{\SCV, 2r+2k+2} \lVert^2$ is the same as $\lVert
\boldsymbol{\omega}_{\PI, 2r+2k+2} \lVert^2$ except the three terms are multiplied by $2^{-d},
2^{-d/2+1}$ and 4 respectively; since $\bar{\phi}(0) = \phi_{2\bI}(0) = 2^{-d/2} \phi(0)$ and
$m_2(\bar{\phi}) = 2m_2(\phi)$. Furthermore,
\begin{align*}
\mathrm{SCV}_r(\bH) &= n^{-1}|\bH|^{-1/2} 2^{-(d+r)} \pi^{-d/2} \nu_r(\bH^{-1})
+ (-1)^r n^{-2} \sum_{i,j=1}^n \big[\eta_{2r}(\bX_i - \bX_j; 2\bH+2\bG) \\
&\quad -2\eta_{2r}(\bX_i - \bX_j; \bH+2\bG) + \eta_{2r}(\bX_i - \bX_j; 2\bG)\big].
\end{align*}
So a data-based $m$-stage SCV selector is obtained from straightforward modifications of the PI
selector algorithm above.

\subsection{Simulation study}

The bandwidth selectors included in our simulation study were
\begin{itemize}
\item OR: oracle, i.e. the minimizer of the MISE for the target density
\item NR: normal reference from \citet[Theorem~6]{CDW11}, which is equal to
$[4/(d + 2r + 2)]^{2/(d+2r+4)} \mat{S} n^{-2/(d+2r+4)}$
\item CV: cross validation from Section~\ref{sec:cv}

\item PI: plug-in with 2-stage unconstrained pilots from Section~\ref{sec:plugin}
\item SCV: smoothed cross validation with 2-stage unconstrained pilots from
    Section~\ref{sec:scv}
\end{itemize}
We have developed efficient implementations of all these selectors and incorporated them into the
existing R library \texttt{ks} \citep{Du07}. The target bivariate normal mixture densities that we
considered are displayed in Figure~\ref{fig:nm}. Their explicit definitions can be found in
\cite{Ch09}.

\begin{figure}[!htp]
\includegraphics[width=\textwidth]{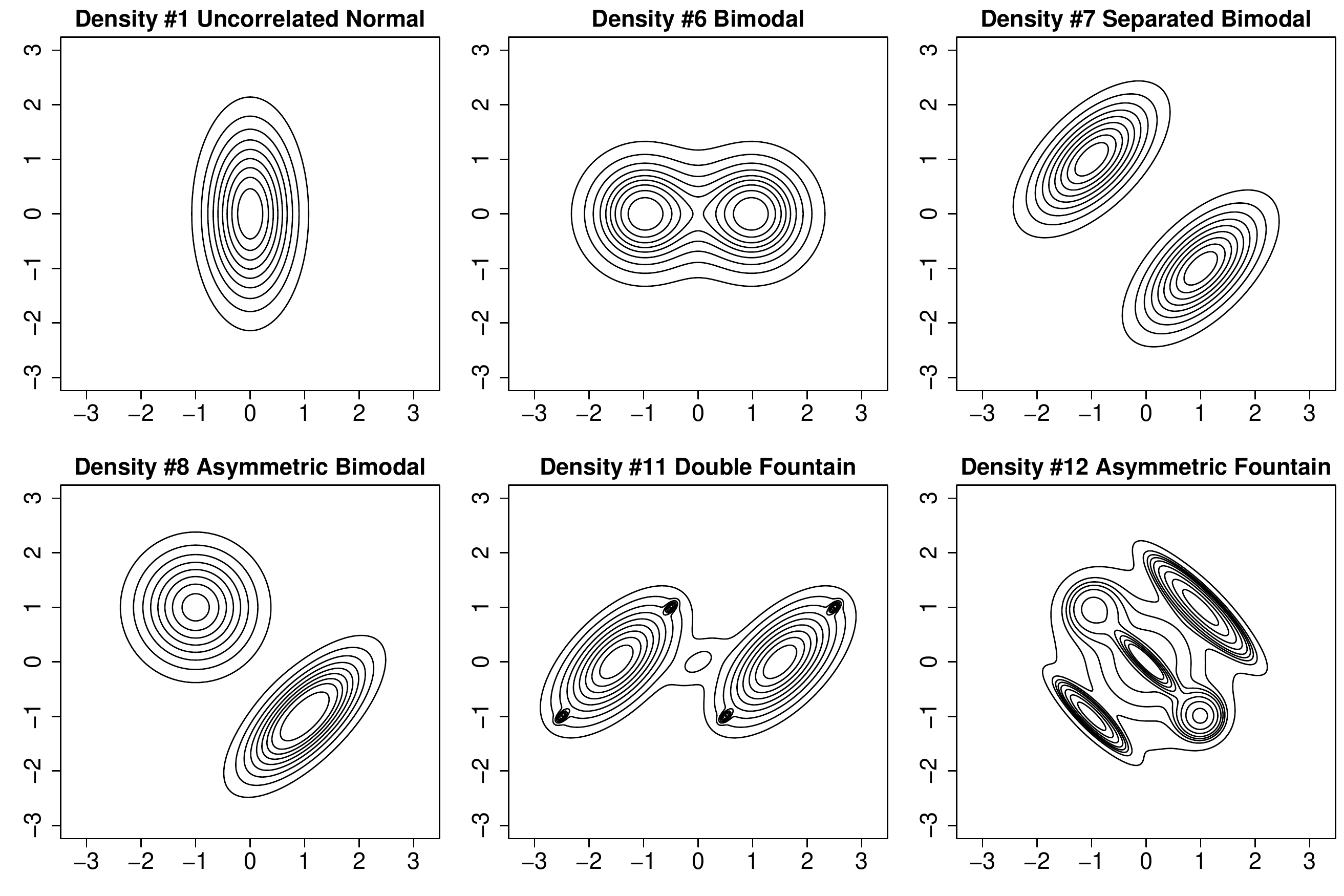}
\caption{Target bivariate normal mixture densities}
\label{fig:nm}
\end{figure}

For each selector and target density and for $r=0,1,2$, we generated 100 samples of size $n=1000$.
The integrated squared error (ISE) between the resulting density estimate and the target density
was computed as our measure of performance. The box plots of the log ISE are shown in
Figure~\ref{fig:ise-samp1000}. We also conducted the study for sample sizes $n=400$ and $n=4000$
but the conclusions extracted were much the same as for $n=1000$ so we decided not to include these
results here to avoid redundancies.

By construction, the oracle selector (OR) is the best possible selector in terms of MISE given that
the true target normal mixture density was used for its computation. As expected, it also has the
uniformly lowest ISE. The normal reference selector (NR) was the only data-based selector
previously available in the literature, and the results show that it is suitable only for
density~\#1 since its ISEs for the other densities are uniformly higher than those of the other
selectors.  In line with other published simulation studies {\citep{CCGM94,JMS96}}, the CV selector
displays larger variability in the ISEs than the PI and SCV selectors, though the former presents
lower mean ISEs in some cases, \color{black} e.g. density~\#12, $r=0,1$.
We note also that the CV
variability tends to increase with increasing $r$, whereas this is not observed for the two other
hi-tech selectors. An anonymous referee drew our attention to the low variability of the introduced bandwidth selectors for this density \#12, as compared with that of the oracle. This density has very complicated features, like modal regions of different shape and size, so this is the scenario where
usually oversmoothing occurs, and we checked that this is indeed the case: the oracle tries hard to discover the true structure (hence its high variability), whereas all the data-driven bandwidths tend to consistently prefer a more conservative estimate, slightly oversmoothed.  Given that the construction {and theoretical properties} of the PI and SCV
selectors are similar, it is not surprising that their ISE performance is correspondingly similar
for all the cases examined here. Either of these selectors would thus be our recommendation over
the CV and NR selectors.

%\begin{figure}[!htp]
%\includegraphics[width=\textwidth]{ise-samp400-d2}
%\caption{Box plots of ISEs for bandwidth selectors for $n=400, r=0,1,2$ for the six
%bivariate target densities.}
%\label{fig:ise-samp400}
%\end{figure}

\begin{figure}[!htp]
\includegraphics[width=\textwidth]{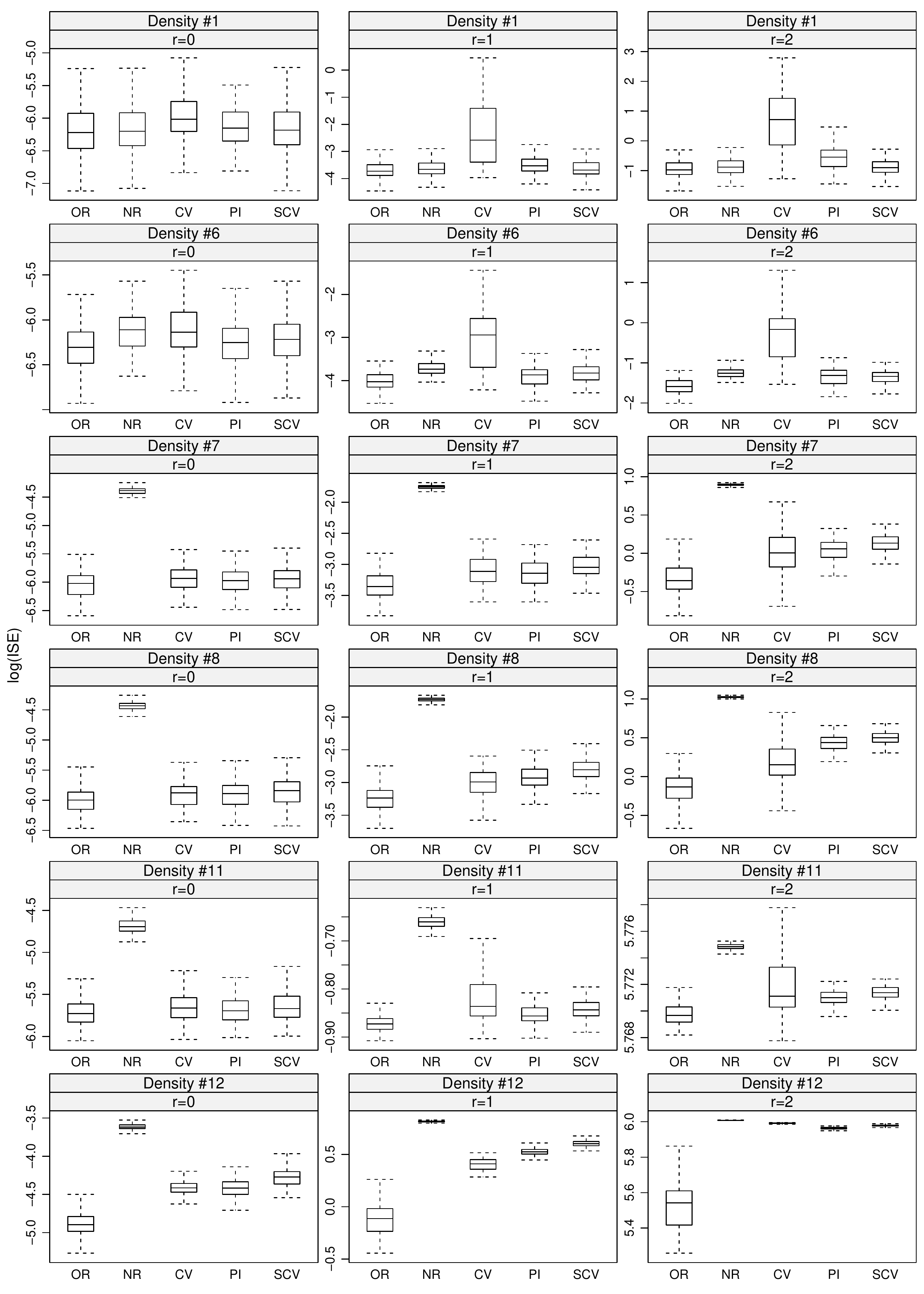}
\caption{Box plots of the logarithm of the ISEs for bandwidth selectors for $n=1000, r=0,1,2$ for the six
bivariate target densities.}
\label{fig:ise-samp1000}
\end{figure}

%\begin{figure}[!htp]
%\includegraphics[width=\textwidth]{ise-samp4000-d2}
%\caption{Box plots of ISEs for bandwidth selectors for $n=4000, r=0,1,2$ for the six bivariate
%target densities.} \label{fig:ise-samp4000}
%\end{figure}

\section{Applications to mean shift clustering}\label{sec:ms}

The so-called mean shift algorithm \citep{FH75} is an iterative procedure which, at every step,
shifts the point obtained in the previous iteration in the direction of the density gradient, producing
a convergent sequence that transports any initial value to a local maximum of the density
along the steepest ascent path.

Specifically, the mean shift clustering algorithm can be described as follows: an initial point $\bY_0$ is transformed recursively to obtain a sequence defined by
\begin{equation}\label{eq:meanshift}
\bY_{j+1}=\bY_j+\mat A\widehat{\D f}(\bY_j)\big/\hat f(\bY_j),
\end{equation}
where $\hat f$ is an arbitrary density estimator, $\widehat{\D f}$ is an estimator of the density gradient, and $\mat A$ is a fixed $d\times d$ positive definite matrix, properly chosen to guarantee convergence of the sequence $(\bY_0,\bY_1,\ldots)$. This is easily recognized as a variant of the well-known gradient ascent algorithm employed to find the local maxima of a given function, but using the normalized density gradient (i.e., the density gradient divided by the density itself) instead of just the gradient in its definition. The advantages of using such a normalization are illustrated in \cite{FH75}, \cite{Ch95} and \cite{CM02}; one of them is to accelerate the convergence of the resulting sequence for initial values of low density.

When kernel estimators are used in (\ref{eq:meanshift}) the above procedure attains a particularly simple form. Assuming that the kernel $K$ is a spherically symmetric function it follows that $K(\bx)=\tfrac12k(\|\bx\|^2)$, where the function $k\colon\mathbb R_+\to\mathbb R$ is known as the profile of $K$. Under the usual conditions that $K$ is smooth and unimodal, its profile is decreasing so that $g(x)=-k'(x)\geq0$. Therefore, noting that $\D K(\bx)=-\bx g(\|\bx\|^2)$, the kernel density gradient estimator can be written as
\begin{align}\label{eq:DfH}
\D\hat f_\bH(\bx)&=n^{-1}|\bH|^{-1/2}\bH^{-1}\sum_{i=1}^n(\bX_i-\bx)g\big((\bx-\bX_i)^\top\bH^{-1}(\bx-\bX_i)\big)=\bH^{-1}\tilde f_\bH(\bx)\mat m_\bH(\bx),
\end{align}
where $\tilde f_\bH(\bx)=n^{-1}|\bH|^{-1/2}\sum_{i=1}^ng\big((\bx-\bX_i)^\top\bH^{-1}(\bx-\bX_i)\big)$ can be understood as an unnormalized kernel estimator of $f$ and the term
$$\mat m_\bH(\bx)=\frac{\sum_{i=1}^n\bX_ig\big((\bx-\bX_i)^\top\bH^{-1}(\bx-\bX_i)\big)}{\sum_{i=1}^ng\big((\bx-\bX_i)^\top\bH^{-1}(\bx-\bX_i)\big)}-\bx$$
is known as the {\em mean shift}. Thus, equation (\ref{eq:DfH}) can be re-arranged to note that $\bH^{-1}\mat m_\bH(\bx)$ provides a reasonable estimator of the normalized density gradient, and by taking $\mat A=\bH$ in equation (\ref{eq:meanshift}) it leads to the recursively defined sequence
\begin{equation}\label{eq:ms2}
\bY_{j+1}=\bY_j+\mat m_\bH(\bY_j)=\frac{\sum_{i=1}^n\bX_ig\big((\bY_j-\bX_i)^\top\bH^{-1}(\bY_j-\bX_i)\big)}{\sum_{i=1}^ng\big((\bY_j-\bX_i)^\top\bH^{-1}(\bY_j-\bX_i)\big)}.
\end{equation}
When $k$ is a convex and monotonically decreasing profile, and $\bH=h^2\bI_d$, \citet[Theorem 1]{CM02} showed that the sequence $(\bY_0,\bY_1,\ldots)$ defined in this simple way converges to a local maximum of $\hat f_\bH$, and their proof can be easily adapted to cover the case of an unconstrained $\bH$ as well. The recursive formulation (\ref{eq:ms2}) was also motivated as an EM-type algorithm for mode finding in \cite{LRL07}, who proved its convergence under more general conditions.

Since the direction along which the data points are shifted, as well as the limit points of the
sequences of successive locations (i.e., the solutions of $\D\hat f_\bH(\bx)=0$), are directly
related to the density gradient, our proposal is to take $\bH$ in the mean shift algorithm as a
bandwidth matrix selector for multivariate kernel density gradient estimation, using any of the
methods introduced in Section \ref{sec:3}. This choice is also supported by the results in
\cite{GH95} and \cite{V96}, where it was shown that the optimal bandwidth choice for estimating the
mode of a density is closely related to the problem of density derivative estimation. Thus, the
bandwidth choice is made with the goal of optimal identification of the density features in mind.
This is in contrast with other proposals, as for example \cite{Co03}, where a different criterion
is taken into account to obtain an automatic variable-bandwidth selection algorithm.

When only a few iterations of the mean shift algorithm are performed, it
is probably the case that convergence has not been reached yet. However,
the procedure is still useful for other tasks. These include data
filtering \citep{FH75}, which seeks to reduce the effect of noise in the
determination of the geometric properties of a data set or in finding
local principal curves, and also data sharpening \citep{CH99,HM02}, which
can be used to reduce the bias in kernel curve estimation and to adapt
kernel estimators to pre-specified curve constraints.

The main statistical application of the mean shift procedure is for cluster analysis.
For several additional applications in engineering, see \citet{Ch95,CM02,CRM03}. When the mean
shift algorithm is applied with any of the data points as starting value it induces a partition of
the data in a natural way, by assigning the same cluster to all the data points that converge to
the same local maximum. This is called {\em modal clustering} in \cite{LRL07}. Notice that this
methodology does not require the number of clusters to be specified in advance, and that it allows
clusters of arbitrary shape to be discovered. Moreover, since the mean shift algorithm can be
applied with any starting point, it does not produce only a partition of the data, but a partition
of the whole space.

To illustrate the use of mean shift clustering Figure \ref{fig:paths} shows the result of applying
the mean shift algorithm to a sample of size $n=210$ from a trimodal normal mixture (the one
labeled Trimodal III in \cite{WJ93}). The black bold stars show the location of the three modes
found and the paths in grey starting from every data point depict their
ascent towards their associated density mode.
\begin{figure}[!htp]\centering
\includegraphics[scale=0.65]{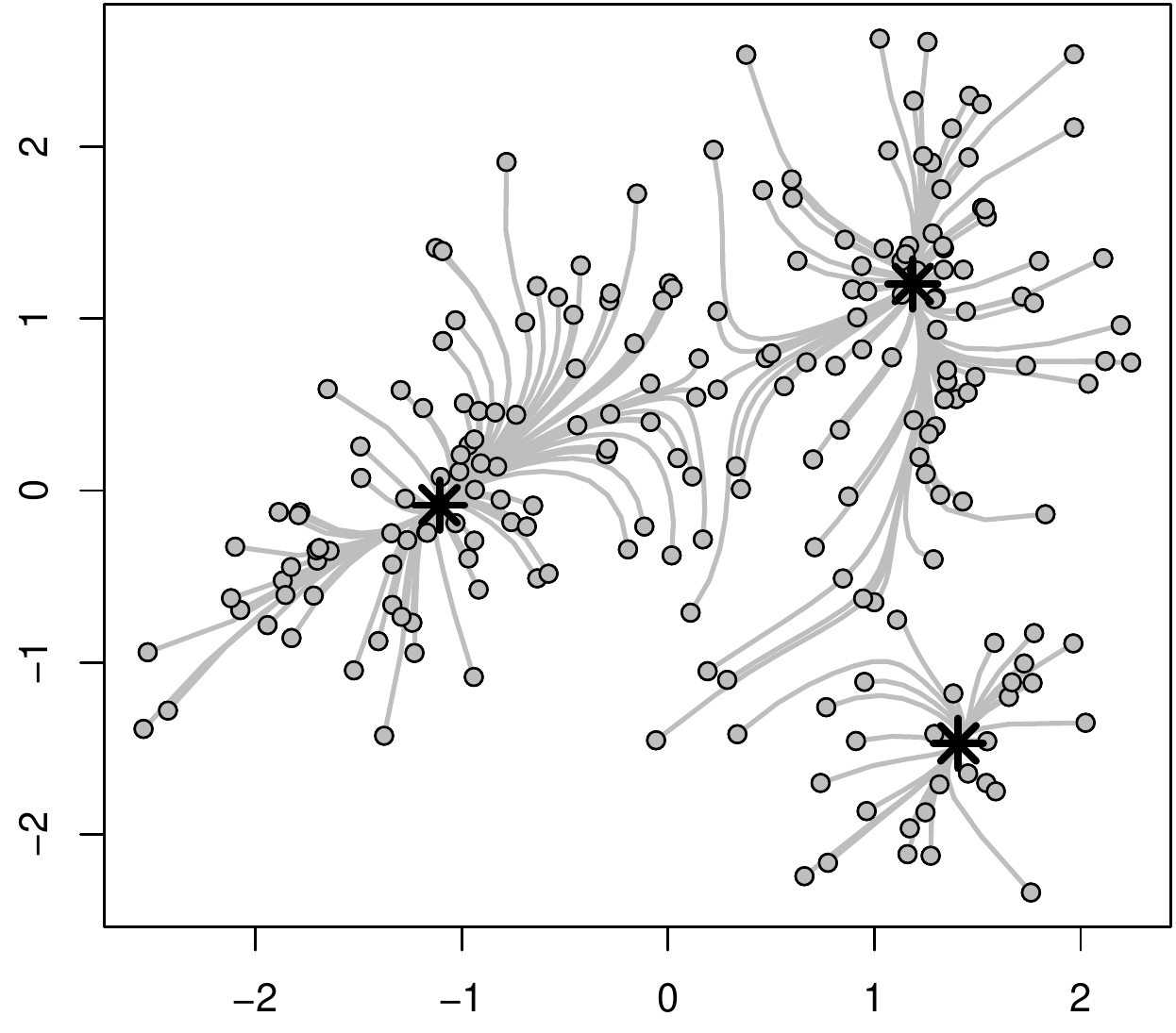}
\caption{Paths followed by the sample points as a result of the application of the mean shift algorithm. Sample of size $n=210$ from a trimodal normal mixture density.}
\label{fig:paths}
\end{figure}

\subsection{Simulation results}

As pointed out above, any of the bandwidth selection methods for kernel density gradient estimation
introduced in Section \ref{sec:3} leads automatically to a new nonparametric clustering procedure
via the mean shift algorithm. To explore the finite sample properties of these new proposals, their
performance is compared here to other related existing methods.

Given the enormous amount of literature on clustering techniques, it would be impossible to include
all the different clustering procedures in this comparison, so a brief selection of techniques
similar to the one introduced here have been considered:
\begin{itemize}
\item CLUES algorithm (CLUstEring based on local Shrinking), proposed in \cite{WQZ07}, is an
    iterative algorithm closely related to the mean shift algorithm, but in which the shift is
    performed at each iteration by computing the coordinate-wise median of the $K$-nearest
    neighbors of the previous iteration point.

\item PDFC algorithm (PDF Clustering), proposed in \cite{AT07}, is also based on a kernel
    density estimate. Its high density regions are computed and the connected components of
    this regions are identified as sample clusters. The bandwidth used in the kernel estimate
    is just a diagonal normal scale rule-of-thumb for the density (not the density gradient),
    multiplied by a subjectively chosen shrinkage factor $3/4$ to correct for oversmoothing. {
    We are aware of the existence of other clustering methods based on high density regions, as
    for instance \cite{CFF01} or \cite{RW10}, but decided to include in this admittedly limited
    study only the PDFC algorithm due to its simplicity.}

\item MCLUST algorithm (Mixture model CLUSTering), as surveyed in \cite{FR02}, is included in
    the comparison since it can be recognized as the parametric golden standard.
\end{itemize}
These three methodologies are compared with mean shift clustering using unconstrained bandwidth
matrices for density gradient estimation obtained with: 1) the normal scale rule derived in
\cite{CDW11} (labeled NR), 2) the cross-validation bandwidth (labeled CV), 3) the plug-in bandwidth
(labeled PI), and 4) the smoothed cross-validation bandwidth (labeled SCV).

The comparison is made along five test clustering problems, generated by five bivariate mixture
densities that have been chosen to investigate the performance of the methods in a typical
parametric setup (two normal mixture densities) and in situations with non-ellipsoidal cluster
shapes, having also different scales. Specifically, the five mixture densities in the study are:
\begin{enumerate}
\item Trimodal III density from \cite{WJ93}.
\item Quadrimodal density from \cite{WJ93}.
\item 4-crescent model. This model is intended to mimic the distribution explored in Figure 7
    of \cite{Co03}. Since an explicit expression of the density function is not given there,
    our model has been generated as a suitable modification of Experiment 4 in \citet[p.
    546]{F90}. Namely, a bivariate random vector $\bX$ is defined to have a crescent
    distribution with center $\mat O\in\mathbb R^2$, radius $r>0$ and convexity indicator
    $\kappa\in\{0,1\}$, denoted $C(\mat O,r,\kappa)$ if $\bX=\mat O+(r\cos\Theta,(-1)^\kappa
    r\sin\Theta)^\top+\mat U$, where $\Theta$ is normally distributed with mean $\pi/2$ and
    variance $(\pi/6)^2$ and $\mat U$ is a bivariate centred normal vector with variance matrix
    $(r/20)^2\bI_2$. Then, the 4-crescent model is the equally weighted 4-component mixture
    density with components $C((-1,1)^\top,1,1)$, $C((0,0.5)^\top,1,0)$, $C((0,0)^\top,0.5,1)$
    and $C((0.5,-0.5)^\top,0.5,0).$
\item Broken ring model. This model aims to reproduce the sampling scheme shown in Figure 3 in
    \cite{WQZ07}. Precisely, a bivariate random vector $\bX$ is defined to have a standard
    half-crescent distribution with mean angle $\theta$, denoted $HC(\theta)$ if
    $\bX=(\cos\Theta,\sin\Theta)^\top+\mat U$, where $\Theta$ is normally distributed with mean
    $\theta$ and variance $(\pi/12)^2$ and $\mat U$ is a bivariate centred normal vector with
    variance matrix $(1/20)^2\bI_2$. Then, the broken ring model is the 5-component mixture
    density having a centred normal component with variance $(1/5)^2\bI_2$ and weight $1/4$,
    and four standard half-crescent components with equal weights $3/16$ and mean angles
    $\pi/4$, $3\pi/4$, $5\pi/4$ and $7\pi/4$, respectively.
\item Eye model. This model is a variation of the former. It is also a 5-component mixture density with a centred normal component with variance $(1/5)^2\bI_2$ as before, but with a weight $1/20$. The other 4 components are centred crescent distributions (i.e., $\mat O=(0,0)^\top$), two of them with radius 1 and the two possible convexity indicators, respectively, having weight $1/8$ each; and the other two with radius $1.5$ and also the two possible convexity indicators, but with weight $7/20$ each, and rotated 90 degrees.
\end{enumerate}
A clearer picture of all these models is provided by Figure \ref{fig:clustersamp}, which shows samples of size $n=800$ for each of them.

\begin{figure}[t]\centering
\includegraphics[width=0.32\textwidth]{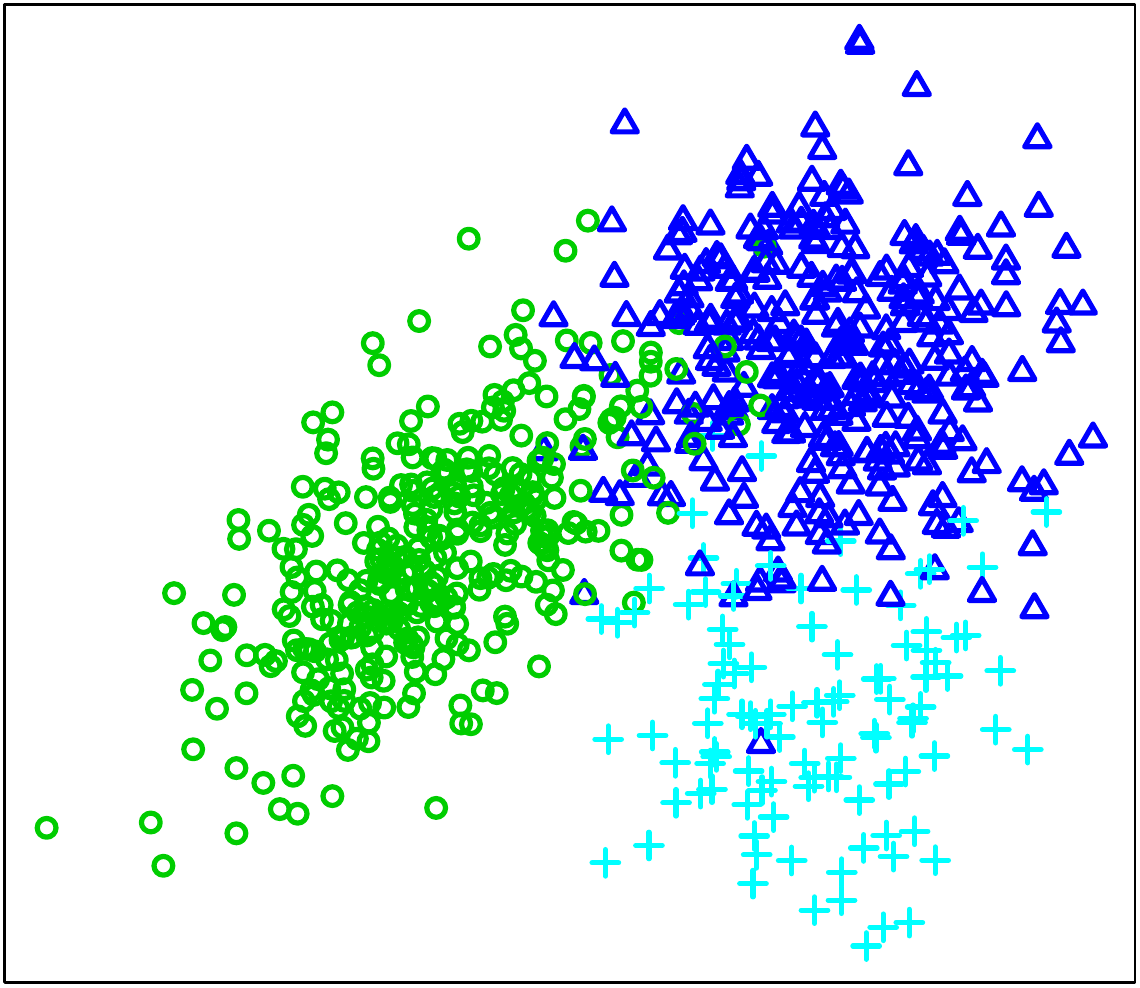}\quad\includegraphics[width=0.32\textwidth]{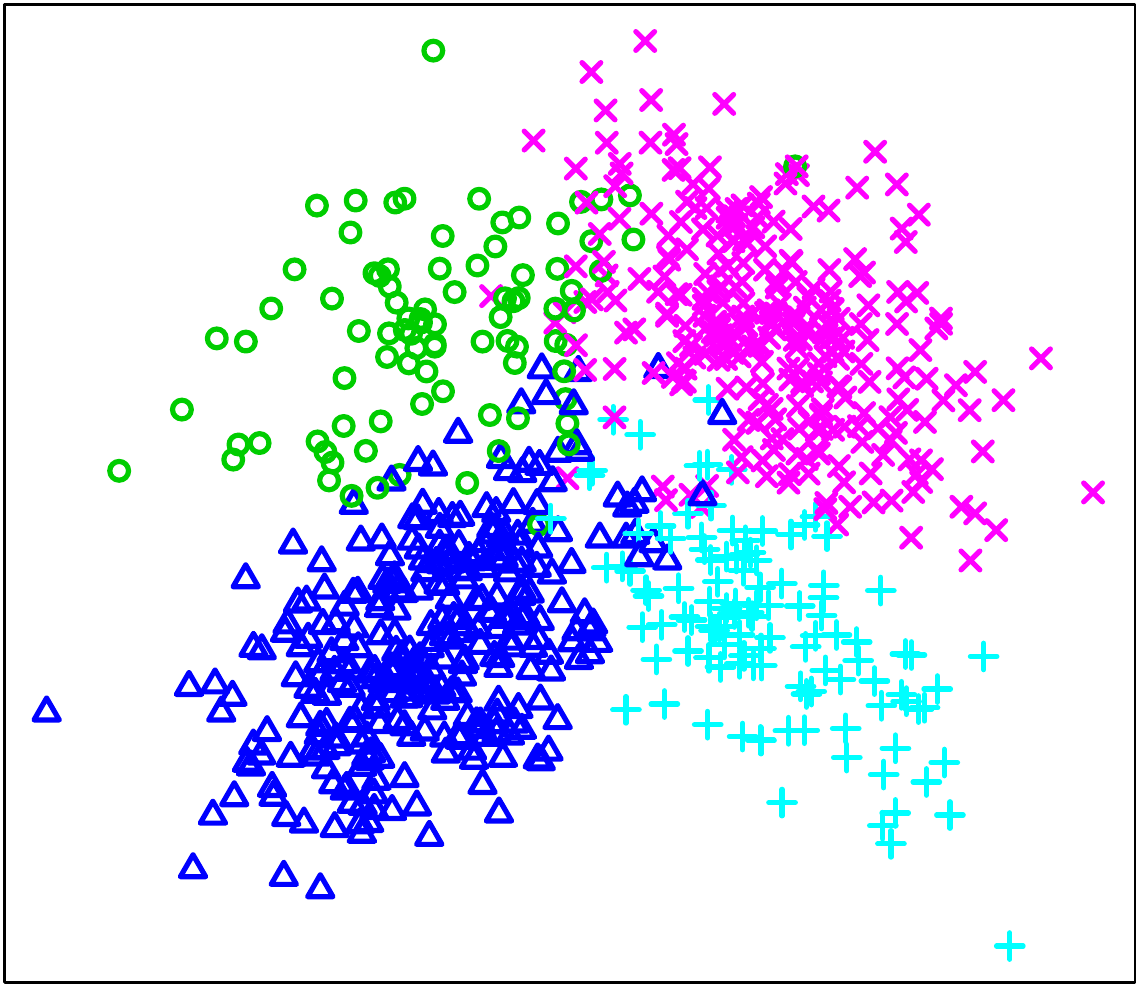}
\includegraphics[width=0.32\textwidth]{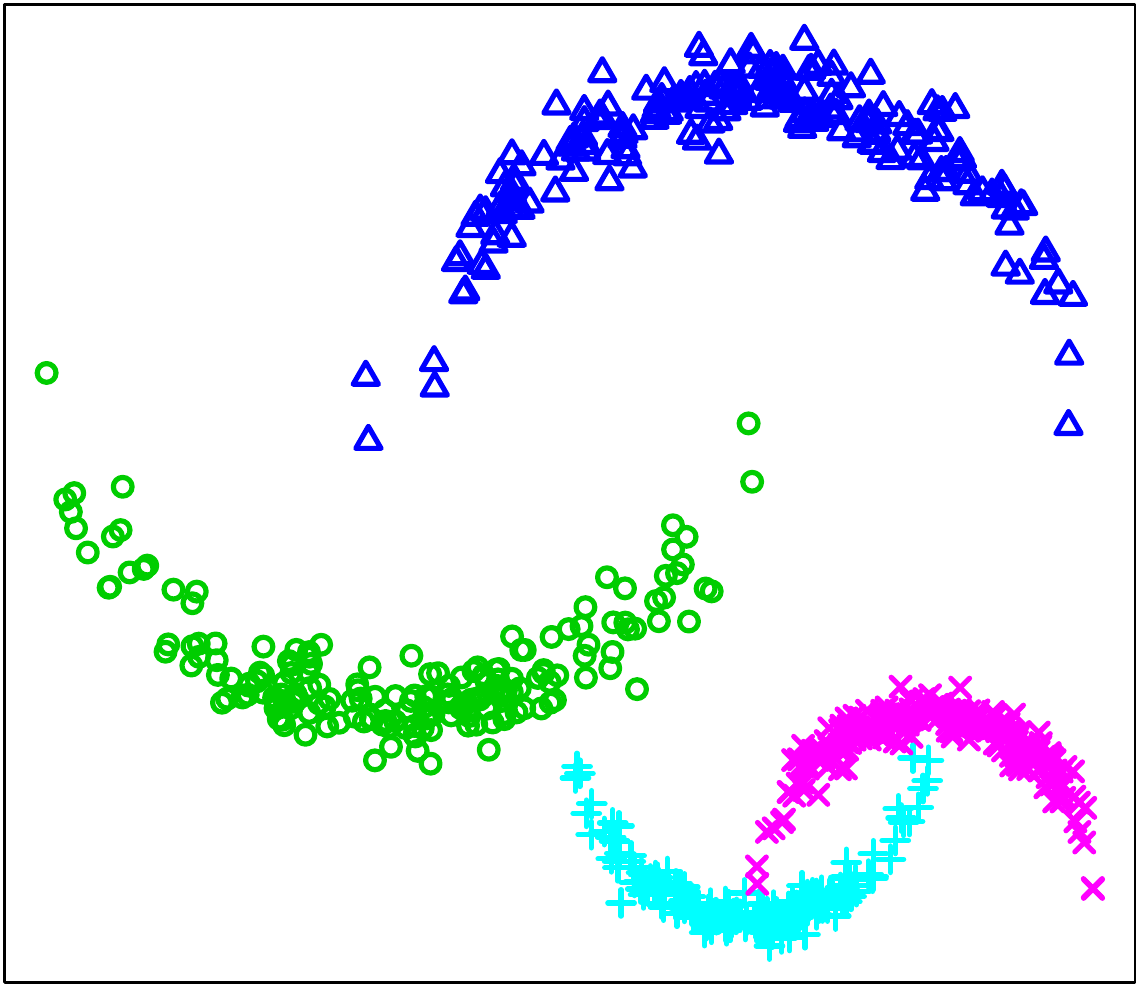}\\[1em]
\includegraphics[width=0.32\textwidth]{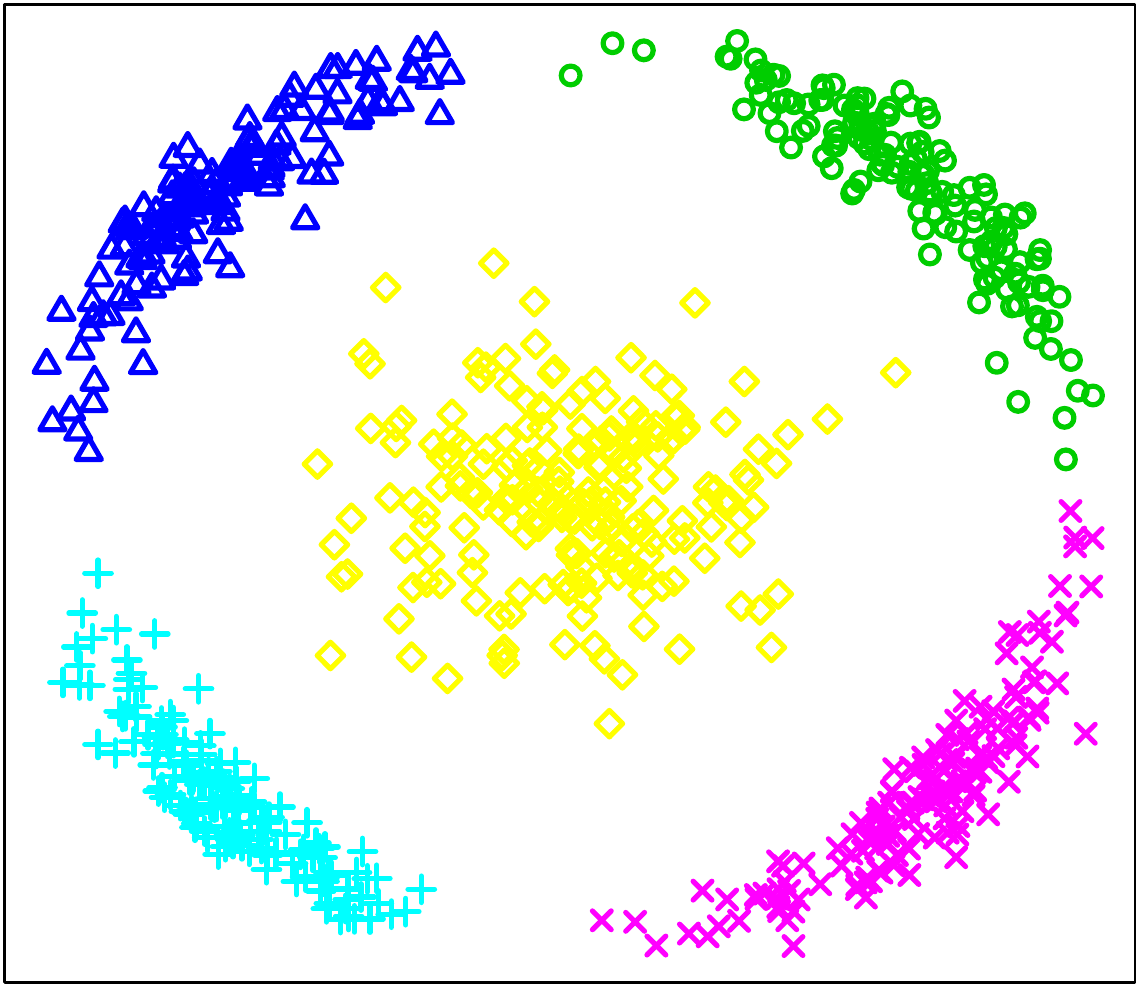}\quad\includegraphics[width=0.32\textwidth]{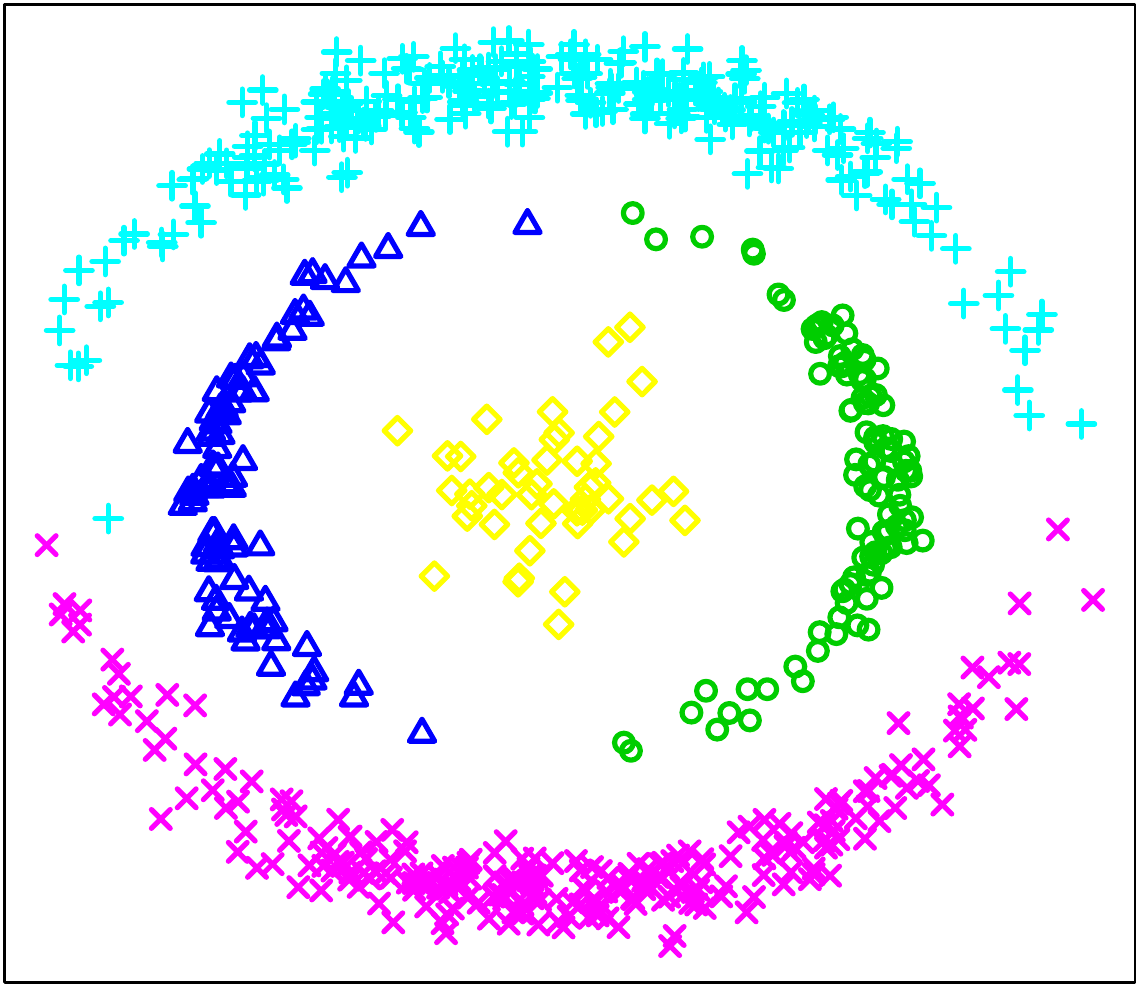}
\caption{Samples of size $n=800$ from each of the models considered in the clustering simulation study.
 Different cluster membership is indicated with different plotting characters and colours.}
\label{fig:clustersamp}
\end{figure}

In common with \cite{AT07}, \cite{WQZ07} and many others, the performance of each clustering method
is measured through the adjusted Rand index (ARI), which was introduced by \cite{HA85} as a
corrected-for-chance version of the proportion of agreements between two partitions of a given data
set. This index is the overall preferred accuracy measure in the simulation study of \cite{MC86}.
An ARI value of 1 indicates that all estimated memberships are the same as the true memberships,
whereas a value close to 0 indicates that the estimated cluster assignation does not differ much
from random assignment. For the comparison, 100 samples of size 500 were drawn from each of the
five test models, the data were clustered according to the seven methods in the study (four mean
shift procedures plus CLUES, PDFC and MCLUST) and the ARI was computed to measure the performance
of each method for each of these data sets. Table \ref{tab:ari} presents the average ARI values
obtained.

\begin{table}[!ht]
\centering
\begin{tabular}{lccccccc}
\hline
         & NR & CV & PI & SCV & CLUES & PDFC & MCLUST\\
\hline

Trimodal III
&   0.700 & 0.694 & 0.752 & 0.546 & 0.583 & 0.754 & 0.715\\
       \hline
Quadrimodal
&   0.518 & 0.617 & 0.630 & 0.505 & 0.519 & 0.641 & 0.790\\
       \hline
4-crescent
&  0.569 & 0.920 & 0.913 & 0.932 & 0.805 & 0.834 & 0.481 \\
       \hline
Broken ring
&   0.983 & 0.918 & 0.983 & 0.986 & 0.984 & 0.975 & 0.811\\
       \hline
Eye
&    0.606 & 0.742 & 0.765 & 0.585 & 0.548 & 0.544 & 0.420\\
       \hline
\end{tabular}
\caption{Average adjusted Rand index (ARI) for 100 simulation runs of
size $n=500$ of each distribution.} \label{tab:ari}
\end{table}

In view of Table \ref{tab:ari}, none of the methods compared is uniformly the best. In the group of
the mean shift procedures, the use of the PI bandwidth seems to exhibit the best overall
performance. The CV choice can be rated second best, with similar or even slightly (but not
significantly) better average ARI in some cases. The SCV bandwidth shows an unexpectedly inferior
performance for the normal mixture models, but it has an acceptable behaviour for the models with
non-standard cluster shapes. Finally, the normal scale rule NR is clearly inferior in four out of
the five models, but it performs surprisingly well for the broken ring model; since it is the least
intensive method in computational terms, it could be useful at least to provide a quick initial
analysis, especially in higher dimensions.

The comparison with the parametric method MCLUST followed the expected guidelines: for the normal
mixture models MCLUST showed good results, especially for the difficult quadrimodal density, but it
seems unable to adapt itself to the non-standard cluster shape situations. On the contrary, CLUES
is not very powerful for a standard setup with ellipsoidal clusters, but seems to performs
reasonably well for non-standard problems. Finally, PDFC shows remarkable results in the simulation
study, in spite of the ad hoc choice of the bandwidth in which it is based, and its performance is
comparable to that of the best mean shift procedure, with the only exception of the eye model.
Surely a more careful study of the bandwidth selection problem would improve the quality of the
PDFC method further.

\subsection{Real data examples}
\color{black}

The mean shift algorithm in conjunction with the new proposed bandwidth selection rules was also
applied to some real data sets. It is well-known that the kernel density estimator tends to produce
spurious bumps (i.e., unimportant modes caused by a single observation) in the tails of the
distribution, and that this problem seems enhanced in higher dimensions, due to the empty space
phenomenon and the curse of dimensionality \citep[see, for instance,][Chapter 4]{Sim96}. For real
data sets, this may result in a number of data points forming singleton clusters after applying the
mean shift algorithm.

Furthermore, in some applications the researcher may be interested in forming more homogeneous
groups so that, say, insignificant groups of size less than $\alpha\%$ of the biggest group are not
allowed in the outcome of the clustering algorithm. This goal can be achieved as follows: apply the
mean shift algorithm to the whole data set and identify all the data points forming groups of size
less than $\alpha\%$ of the biggest group, then leave those singular data points out of the
estimation process in the mean shift algorithm and re-compute the data-based bandwidth and the
density and density gradient estimators in (\ref{eq:meanshift}) using only non-singular data
points. Since the mean shift algorithm produces a partition of the whole space, these left-out data
points can be naturally assigned to any of the corresponding newly obtained clusters. If this new
assignment again contains insignificant clusters then iterate the process until the eventual
partition satisfies the desired requirements. This correction is similar (although a little
different) to the stage called ``merging clusters based on the coverage rate" in \cite{LRL07}, and
will be referred henceforth as {\em correction for insignificant groups}.

\subsubsection{E.coli data}

\color{black}

The {\it E.~coli} data set is provided by the UCI machine learning database repository
\citep{FA10}. The original data were contributed by Kenta Nakai at the Institute of Molecular and
Cellular Biology of Osaka University. The data represent seven features calculated from the amino
acid sequences of $n=336$ E.coli proteins, classified in eight classes according to their
localization sites, labeled imL (2 observations), omL (5), imS (2), om (20), pp (52), imU (35), im
(77), cp (143). A more detailed description of this data set can be found in \cite{HN96}. Since two
of the original seven features are binary variables, only the remaining five continuous variables
($d=5$), scaled to have unit variance, were retained for the cluster analysis.

 The number of groups identified by the mean shift procedure with correction for insignificant
groups (using $\alpha=5\%$ as a default) was 5 for PI and SCV bandwidths, which is the natural
choice if the insignificant clusters imL, omL and imS are merged into bigger groups. The mean shift
algorithm found 6 groups using the NR bandwidth and 7 with the CV bandwidth. Since in this example
the true cluster membership is available from the original data, it is also possible to compare the
performance of the methods using the ARI. The ARIs for these configurations were 0.63 (NR
bandwidth), 0.671 (CV), 0.667 (PI) and 0.559 (SCV). In contrast, CLUES and PDFC indicated a
severely underestimated number of groups in the data, namely 3 and 2, respectively, and whereas
CLUES obtains a remarkably high ARI anyway (0.697), the performance of PDFC is poor for this data
set in ARI terms (0.386). MCLUST also gives a reasonable answer, with 6 groups and an ARI of 0.642.
\color{black}

\subsubsection{Olive oil data}

These data were introduced in \cite{FALT83}, and consist of eight chemical measurements on $n=572$
olive oil samples from three regions of Italy. The three regions R1, R2 and R3 are further divided
into nine areas, with areas A1 (25 observations), A2 (56), A3 (206) and A4 (36) in region R1
(totalling 323 observations); areas A5 (65) and A6 (33) in region R2 (totalling 98); and areas A7
(50), A8 (50) and A9 (51) in region R3 (totalling 151). Detailed cluster analyses of this data set
are given in \cite{S03} and \cite{AT07}. Taking into account the compositional nature of these
data, they were transformed following the guidelines in the latter reference, first dealing with
the effect of rounding zeroes when the chemical measurement was below the instrument sensitivity
level and then applying the additive log-ratio transform to place the data in a 7-dimensional
Euclidean space \citep[see][for a recent monograph on compositional data]{PGB11}. Then, cluster
analysis was carried out over the first five principal components of the scaled Euclidean
variables.

The results of the analysis indicated that whereas some methods seemed to target the partition of
the data into major regions, others tried hard to discover the sub-structure of areas. This was
clearly recognized when the ARIs of the groupings were computed either with respect to one
classification or the other. Naturally, if a method produced a grouping which was accurate with
respect to major regions, it had lower ARI with respect to the division into areas.

CLUES, PDFC and the mean shift algorithm using the NR bandwidth clearly favoured grouping the data
into major categories. The PDFC method obtained a remarkable ARI of 0.841 by clustering the data
into 3 groups, whereas CLUES only found 2 groups resulting in an ARI of 0.680. Using the NR
bandwidth the mean shift algorithm achieved an ARI of 0.920 with respect to the true grouping into
major regions; it correctly identified all the data points in regions R1 and R2, although region R3
appeared divided into several subregions.

In contrast, MCLUST and the mean shift algorithm combined with all the more sophisticated bandwidth
selectors tended to produce groupings closer to the assignment into smaller areas. MCLUST showed
the existence of 8 groups and achieved an ARI of 0.739 with respect to the true distribution into
areas. The mean shift analyses with the CV, PI and SCV bandwidths all found 7 groups, leading to
ARIs of 0.741 (CV bandwidth), 0.791 (PI) and 0.782 (SCV).

\color{black}

\section{Applications to bump-hunting with feature significance} \label{sec:feature}

It is not always easy to interpret visually estimates of multivariate derivatives. To assist us, we
use the significant negative curvature regions of \cite{DCKW08}, defined as the set containing the
values of $\bx\in\mathbb R^d$ such that the null hypothesis that the Hessian $\mathsf Hf(\bx)$ is
positive definite is significantly rejected. The appropriate kernel test statistic, null
distribution and adjustment for multiple testing is outlined in \cite{DCKW08} and implemented in
the \texttt{feature} library in \texttt{R}. Significant negative curvature regions corresponds to a
modal region in the density function, and hence a local maxima in data density. These authors
focused on the scale space approach of smoothing and so did not develop optimal bandwidth selectors
for their density derivative estimates.

Here, we compare the significant curvature regions obtained using a usual $r=0$ bandwidth selector
to those with an $r=2$  optimal bandwidth in Figure~\ref{fig:earthquake} on the earthquake data
from \cite{Sco92}. The recorded measurements are the latitude and longitude (in degrees) and depth
(in km) of epicenters of 510 earthquakes. Here, negative latitude indicates west of the
International Date Line, and negative depth indicates distances below the Earth's surface. The
depth is transformed using --log(--depth). For these transformed data, we use PI selectors
$\bH_{\PI,0}$ and $\bH_{\PI,2}$ and SCV selectors $\bH_{\SCV,0}$ and $\bH_{\SCV,2}$.

\enlargethispage{.5cm}

As expected from asymptotic theory, bandwidths for Hessian estimation are larger in magnitude than
bandwidths for density estimation. Moreover only the central modal region is present using
$\bH_{\PI,0}$, whereas with $\bH_{\PI,2}$, the three local modal regions are more clearly delimited
from the surrounding space, confirming the three modes obtained with subjective bandwidth selection
by \cite{Sco92}.

\begin{figure}[!htp]
\includegraphics[width=0.5\textwidth]{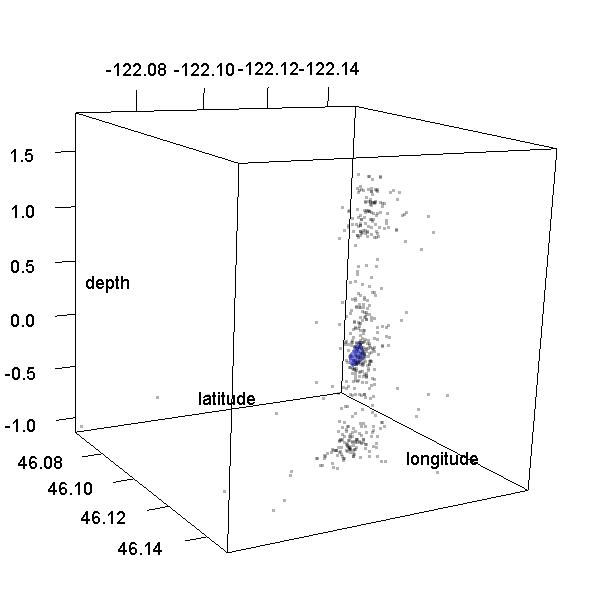}
\includegraphics[width=0.5\textwidth]{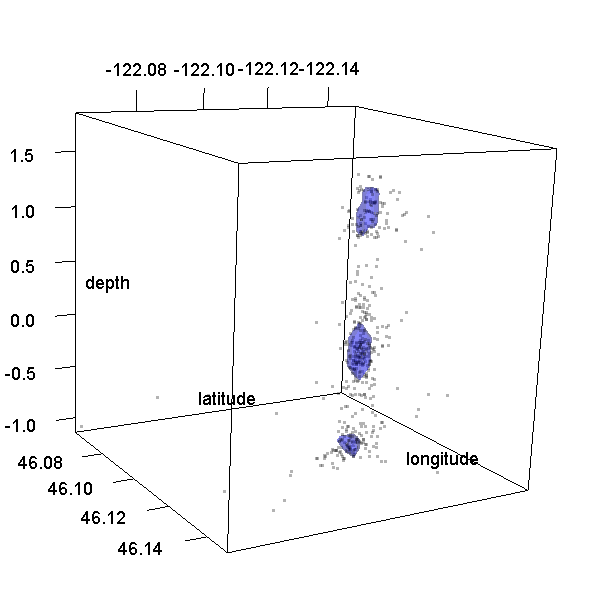} \\
\includegraphics[width=0.5\textwidth]{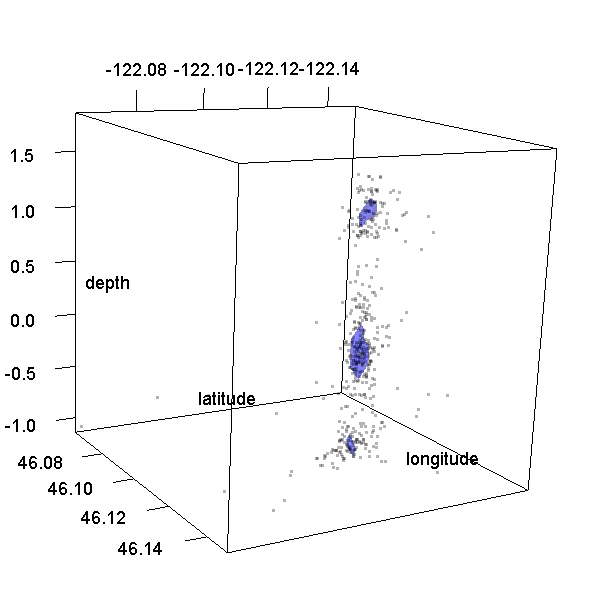}
\includegraphics[width=0.5\textwidth]{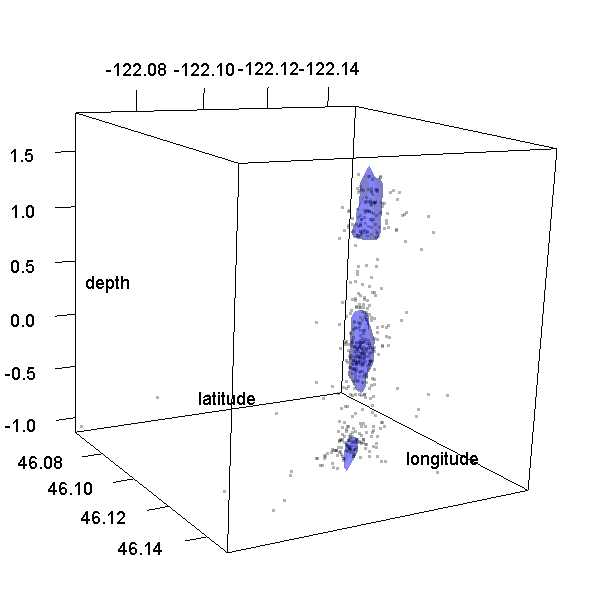} \\
\caption{Significant negative curvature regions (in blue). (Upper left) Plug-in selector $r=0$. (Upper right) Plug-in selector $r=2$. (Lower left) SCV selector $r=0$. (Lower right) SCV selector $r=2$. The significant curvature regions or modal regions are more clearly delimited
from the surrounding scatter point cloud with the selectors corresponding to second derivative.}
\label{fig:earthquake}
\end{figure}

\bigskip

\noindent{\bf Acknowledgments.} Grant MTM2010-16660 (both authors) from the Spanish Ministerio de Ciencia
e Innovaci\'on, and various fellowships (second author)
from the Institut Curie, France, and the Institute of Translational Sciences, France have supported this work.

\appendix
\section{Appendix: proofs}

Henceforth the following assumptions are made:
\begin{enumerate}
\item[(A1)] $K$ is a symmetric $d$-variate density such that $\int \bx \bx^\top K(\bx) \, d\bx=
    m_2(K) \bI_d$ and all its partial derivatives up to order $2r+1$ are bounded, continuous and
    square integrable.
\item[(A2)] $f$ is a density function with all its partial derivatives up to order $2r+6$
    bounded, continuous and square integrable.
\item[(A3)] $\bH = \bH_n$ is a sequence of bandwidth matrices such that all entries of $n^{-1}
    |\bH|^{-1/2} (\bH^{-1})^{\otimes r} $ and $\bH$ tend to zero as $n \rightarrow \infty$.
\end{enumerate}
These do not form a minimal set of assumptions, but they serve as useful starting point
for the results that we subsequently develop. Besides, in this section integrals without any integration limits are assumed to be integrated over the
appropriate Euclidean space. We also assume that suitable regularity conditions are satisfied so that
the exchange of term-by-term integration and differentiation of Taylor expansions are
well-defined.

\begin{proof}[Proof of Lemma~\ref{lem:asymHr}]
Reasoning as in Lemma~1 in \cite{DH05a}, it follows that $\vec(\hat\bH_r-\bH_{\MISE,r})$ is asymptotically
equivalent to
$-[\D_\bH^2\MISE_r(\bH_{\MISE,r})]^{-1}\D_\bH[\widehat{\mathrm{MISE}}_r-\mathrm{MISE}_r](\bH_{\MISE,r})$,
where $\D_\bH^2=\partial^2/(\partial\vec\bH\partial\vec^\top\bH)$ denotes the Hessian operator
corresponding to $\D_\bH$. Therefore, it suffices to show that $\D_\bH^2\MISE_r(\bH_{\MISE,r})= O(\mat{J}_{d^2})$. But for any $\bH$ with entries of order $O(n^{-2/(d+2r+4)})$, as $\bH_{\MISE,r}$, the results in \cite{CDW11} imply that $\MISE_r(\bH)$ is equivalent to $\AMISE_r(\bH)$ and, moreover, the smoothness assumptions ensure that
$\D_\bH^2\MISE_r(\bH)$ is of the same order as $\D_\bH^2\AMISE_r(\bH)$. And it is not hard to show that for the asymptotic integrated squared bias term we have $\D^2_\bH\big\{
\bpsi_{2r+4}^\top\big(\vec \bI_{d^r} \otimes(\VEC \bH)^{\otimes 2}\big)\big\}  = O(\mat{J}_{d^2})$ and similarly for the asymptotic integrated variance, thus finishing the proof.
\end{proof}

\subsection{Convergence rate for the CV bandwidth}

Lemma~\ref{lem:asymHr} shows that $\vec(\hat\bH_{\CV,r}-\bH_{\MISE,r})$ is asymptotically equivalent to
$\D_\bH[\CV_r-\MISE_r](\bH_{\MISE,r})$. Since $\mathbb E[{\rm CV}_r(\bH)]=\MISE_r(\bH)-\tr\mat R(\D^{\otimes r}f)$ for all $\bH$, it follows that the order of
$\vec(\hat\bH_{\CV,r}-\bH_{\MISE,r})$ is given by the (root) order of
\begin{align*}
\Var\big\{\D_\bH&[\mathrm{CV}_r-\mathrm{MISE}_r](\bH_{\MISE,r})\big\}\\
&\sim\Var\bigg\{[n(n-1)]^{-1}\sum_{i\neq j}^n\D_\bH\big[\vec^\top(\bH^{-1})^{\otimes r}(\D^{\otimes 2r}\tilde K)_\bH(\bX_i-\bX_j)\big]\Big\vert_{\bH=\bH_{\MISE,r}}\bigg\},
\end{align*}
where $\tilde K=K*K-2K$. So denoting $\boldsymbol{\varphi}_\bH(\bx)=\D_\bH\big[\vec^\top(\bH^{-1})^{\otimes r}(\D^{\otimes 2r} \tilde K)_\bH(\bx)\big]$, by standard $U$-statistics theory the previous
variance is of the same order as $4n^{-1}(\mat\Xi_1-\mat\Xi_0)+2n^{-2}\mat\Xi_2,$ where
\begin{align*}
\mat\Xi_1&=\E[\boldsymbol{\varphi}_\bH(\bX_1-\bX_2)\boldsymbol{\varphi}_\bH(\bX_1-\bX_3)^\top]\\
\mat\Xi_2&=\E[\boldsymbol{\varphi}_\bH(\bX_1-\bX_2)\boldsymbol{\varphi}_\bH(\bX_1-\bX_2)^\top]\\
\mat\Xi_0&=\E[\boldsymbol{\varphi}_\bH(\bX_1-\bX_2)]\E[\boldsymbol{\varphi}_\bH(\bX_1-\bX_2)]^\top
\end{align*}
with $\bH$ of the order of $\bH_{\MISE,r}$, namely having all its entries of order $O(n^{-2/(d+2r+4)})$. The following lemma provides an explicit expression for the function $\boldsymbol{\varphi}_\bH(\bx)$ that will be helpful to evaluate $\mat\Xi_p, p=0,1,2$.

\begin{lemma}
\label{lem:varphi}
The function $\boldsymbol{\varphi}_\bH(\bx)$ can be explicitly expressed as
$\boldsymbol{\varphi}_\bH(\bx)=\mat A(\D^{\otimes 2r} \tilde K)_\bH(\bx)+\mat B\boldsymbol{\rho}_\bH(\bx)$
where the function $\boldsymbol{\rho}\colon\mathbb R^d\to\mathbb R^{d^{2r+2}}$ is given by $\boldsymbol{\rho}(\bx)=(\bI_{d^{2r}}\otimes\bx\otimes \bI_d)\D^{\otimes(2r+1)}\tilde K(\bx)$
and the matrices $\mat A\equiv\mat A(\bH)\in\mathcal M_{d^2\times d^{2r}}$, $\mat B\equiv\mat B(\bH)\in\mathcal M_{d^2\times d^{2r+2}}$ are defined as
\begin{align*}
\mat A&=-\tfrac12(\vec^\top\bH^{\otimes-r}\otimes\vec \bH^{-1})-r(\vec^\top\bH^{\otimes-(r-1)}\otimes\bH^{\otimes -2})\\
\mat B&=-[\vec^\top\bH^{\otimes -r}\otimes(\bH^{1/2}\otimes\bH^{1/2}+\bI_d\otimes\bH)^{-1}]
\end{align*}
where we understand that $\bH^{\otimes-r}=(\bH^{-1})^{\otimes r}=(\bH^{\otimes r})^{-1}$.
\end{lemma}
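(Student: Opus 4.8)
The plan is to differentiate the scalar-valued map $\bH\mapsto g_\bH(\bx):=\vect(\bH^{-1})^{\otimes r}(\D^{\otimes 2r}\tilde K)_\bH(\bx)$ directly, exploiting that $\bH$ enters through exactly three factors once we unfold the scaling convention as $g_\bH(\bx)=|\bH|^{-1/2}\,\vect(\bH^{-1})^{\otimes r}\,(\D^{\otimes 2r}\tilde K)(\bH^{-1/2}\bx)$. Applying the product rule for $\D_\bH=\partial/\partial\vec\bH$ splits $\boldsymbol{\varphi}_\bH$ into three groups of terms according to which factor is differentiated. The first two groups (from $|\bH|^{-1/2}$ and from $(\bH^{-1})^{\otimes r}$) leave the vector $(\D^{\otimes 2r}\tilde K)(\bH^{-1/2}\bx)$ untouched and will assemble into $\mat A(\D^{\otimes 2r}\tilde K)_\bH(\bx)$; the third group (from the argument $\bH^{-1/2}\bx$) raises the order of differentiation of $\tilde K$ by one and will produce $\mat B\boldsymbol{\rho}_\bH(\bx)$.

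For the first group I would use $\D_\bH|\bH|^{-1/2}=-\tfrac12|\bH|^{-1/2}\vec\bH^{-1}$ (valid for symmetric $\bH$) together with the elementary identity $\mathbf a\,\mathbf b^\top=\mathbf b^\top\otimes\mathbf a$ for a column $\mathbf a$ and row $\mathbf b^\top$; taking $\mathbf a=\vec\bH^{-1}$ and $\mathbf b^\top=\vect(\bH^{-1})^{\otimes r}=\vec^\top\bH^{\otimes-r}$ recovers exactly the first summand $-\tfrac12(\vec^\top\bH^{\otimes-r}\otimes\vec\bH^{-1})$ of $\mat A$. For the second group I would differentiate the Kronecker power via the product rule, using $\partial\vec\bH^{-1}/\partial\vec^\top\bH=-\bH^{-1}\otimes\bH^{-1}=-\bH^{\otimes-2}$; this yields $r$ summands, one per slot of $(\bH^{-1})^{\otimes r}$, which would ordinarily differ by commutation matrices. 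The crucial simplification is that $\D^{\otimes 2r}\tilde K$ is a symmetric tensor (mixed partials commute), so after contraction all $r$ summands coincide and collapse to the single term $-r(\vec^\top\bH^{\otimes-(r-1)}\otimes\bH^{\otimes-2})$, completing $\mat A$.

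The third group is where the work lies. By the chain rule it equals $|\bH|^{-1/2}\vect(\bH^{-1})^{\otimes r}$ times the Jacobian $\partial[(\D^{\otimes 2r}\tilde K)(\by)]/\partial\vec^\top\bH$ evaluated at $\by=\bH^{-1/2}\bx$, which factors as $[\partial(\D^{\otimes 2r}\tilde K)(\by)/\partial\by^\top]\,[\partial(\bH^{-1/2}\bx)/\partial\vec^\top\bH]$. The inner Jacobian in $\by$ appends one derivative index, and reorganizing it into the vec--Kronecker layout of the paper (again using symmetry of the derivative tensor to absorb the intervening commutation matrices, and the $\bx$-linearity of $\partial\by/\partial\vec^\top\bH$) is precisely what produces the shape $(\bI_{d^{2r}}\otimes\bx\otimes\bI_d)\D^{\otimes(2r+1)}\tilde K$, i.e. the function $\boldsymbol{\rho}$. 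The differential of $\bH^{-1/2}$ I would obtain from the Sylvester relation $\bH^{1/2}(d\bH^{1/2})+(d\bH^{1/2})\bH^{1/2}=d\bH$, whose vec form gives $\vec(d\bH^{1/2})=(\bH^{1/2}\otimes\bI_d+\bI_d\otimes\bH^{1/2})^{-1}\vec(d\bH)$; combining with $d\bH^{-1/2}=-\bH^{-1/2}(d\bH^{1/2})\bH^{-1/2}$ and simplifying via the factorisation $\bH^{1/2}\otimes\bH^{1/2}+\bI_d\otimes\bH=(\bI_d\otimes\bH^{1/2})(\bH^{1/2}\otimes\bI_d+\bI_d\otimes\bH^{1/2})$ delivers the matrix $(\bH^{1/2}\otimes\bH^{1/2}+\bI_d\otimes\bH)^{-1}$ appearing in $\mat B$. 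Pulling the left factor $\vec^\top\bH^{\otimes-r}$ through as the first Kronecker block then yields $\mat B=-[\vec^\top\bH^{\otimes-r}\otimes(\bH^{1/2}\otimes\bH^{1/2}+\bI_d\otimes\bH)^{-1}]$ and the term $\mat B\boldsymbol{\rho}_\bH(\bx)$.

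The main obstacle is the Kronecker-index bookkeeping in this third group: reconciling the raw Jacobians $\partial(\D^{\otimes 2r}\tilde K)/\partial\by^\top$ and $\partial(\bH^{-1/2}\bx)/\partial\vec^\top\bH$ with the compact closed forms of $\boldsymbol{\rho}$ and $\mat B$ requires repeated use of commutation-matrix identities and of $\vec(\mat A\mat X\mat B^\top)=(\mat B\otimes\mat A)\vec\mat X$, and it is only the symmetry of $\bH$ and of the partial-derivative tensor that keeps these manipulations tractable by letting the commutation matrices act trivially.
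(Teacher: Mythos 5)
Your proposal is correct and follows essentially the same route as the paper's proof: the same three-term product-rule split of $|\bH|^{-1/2}\,\vec^\top\bH^{\otimes -r}\,\D^{\otimes 2r}\tilde K(\bH^{-1/2}\bx)$, with the determinant and Kronecker-power differentials assembling $\mat A$ and the chain-rule term (via $d\vec\bH^{-1/2}$ and the factorisation $\bH^{1/2}\otimes\bH+\bH\otimes\bH^{1/2}=(\bH^{1/2}\otimes\bI_d)(\bI_d\otimes\bH+\bH^{1/2}\otimes\bH^{1/2})$) producing $\mat B\boldsymbol{\rho}_\bH$. The only difference is that you re-derive from scratch---via the symmetry of the derivative tensor and the Sylvester equation---the differentials $d(\vec\bH^{\otimes -r})$ and $d\vec\bH^{-1/2}$ that the paper simply cites from Chac\'on and Duong (2010).
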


\begin{proof}
Since $\vect\bH^{\otimes-r}(\D^{\otimes 2r} \tilde K)_\bH(\bx)=|\bH|^{-1/2}\vec^\top\bH^{\otimes -r}\D^{\otimes 2r}\tilde K(\bH^{-1/2}\bx)$, its differential is decomposed into three terms
\begin{align*}
d\big(\vect\bH^{\otimes-r}(\D^{\otimes 2r} \tilde K)_\bH(\bx)\big)
&= d(|\bH|^{-1/2}) \vect \bH^{\otimes -r} \D^{\otimes 2r} \tilde K (\bH^{-1/2} \bx) \\
&\quad + |\bH|^{-1/2} (d\vec \bH^{\otimes -r})^\top \D^{\otimes 2r} \tilde K (\bH^{-1/2} \bx) \\
&\quad + |\bH|^{-1/2} \vect \bH^{\otimes -r} d\big(\D^{\otimes 2r} \tilde K (\bH^{-1/2} \bx)\big).
\end{align*}
From \cite{CD10}, the differentials involved in the first two terms can be expressed as
\begin{align*}
d(|\bH|^{-1/2}) &= - \tfrac{1}{2} |\bH|^{-1/2} (\vect \bH^{-1}) d \vec\bH\qquad\qquad\text{and}\\
d(\vec \bH^{\otimes -r}) &= -\mat\Gamma_r[\vec \bH^{\otimes -(r-1)} \otimes \bH^{\otimes -2}] d \vec\bH,
\end{align*}
where $\mat\Gamma_r$ is a matrix such that $\mat\Gamma_r^\top\D^{\otimes 2r}=r\D^{\otimes 2r}$.
For the third term,
\begin{align*}
\vect \bH^{\otimes -r} d\big(\D^{\otimes 2r} \tilde K (\bH^{-1/2} \bx)\big)
&= \vect \bH^{\otimes -r} \big((\D^{\otimes 2r} \D^\top) \tilde K\big) (\bH^{-1/2} \bx) d(\bH^{-1/2}\bx) \\
&= \D^{\otimes 2r+1} \tilde K(\bH^{-1/2} \bx)^\top ( \vec \bH^{\otimes -r} \otimes \bI_d) d(\bH^{-1/2}\bx),
\end{align*}
since $[\D(\D^\top)^{\otimes 2r}]\vec\bH^{\otimes -r}=\vec\big(\bI_d[\D(\D^\top)^{\otimes 2r}]\vec\bH^{\otimes -r}\big)=(\vect\bH^{\otimes-r}\otimes\bI_d)\D^{\otimes 2r+1}$. Finally, using $d \vec\bH^{-1/2} = -(\bH^{1/2} \otimes \bH + \bH \otimes \bH^{1/2})^{-1} d\vec\bH$ from \cite{CD10}, it follows that $d( \bH^{-1/2} \bx) = (\bx^\top \otimes \bI_d) d \vec\bH^{-1/2} = -(\bx^\top\bH^{-1/2}
\otimes \bI_d)(\bI_d \otimes \bH+\bH^{1/2} \otimes \bH^{1/2})^{-1} d\vec\bH $.
Thus the derivative reads
\begin{align*}
\D_\bH\big(\vect\bH^{\otimes-r}(\D^{\otimes 2r} \tilde K)_\bH(\bx)\big)
&=-\tfrac{1}{2} |\bH|^{-1/2} (\vect \bH^{\otimes -r} \otimes \vec \bH^{-1}) \D^{\otimes 2r} \tilde K (\bH^{-1/2} \bx)   \\
&\quad - r|\bH|^{-1/2} (\vect \bH^{\otimes -(r-1)} \otimes \bH^{\otimes -2}) \D^{\otimes 2r} \tilde K (\bH^{-1/2} \bx) \\
&\quad -|\bH|^{-1/2} (\bH^{1/2} \otimes \bH^{1/2} + \bI_d \otimes \bH)^{-1} (\bH^{-1/2}\bx \otimes \bI_d)   \\
&\quad \times (\vect \bH^{\otimes -r}  \otimes \bI_d ) \D^{\otimes 2r+1} \tilde K (\bH^{-1/2} \bx).
\end{align*}
The central factors of the third term on the right hand side can be rewritten as
\begin{align*}
(\bH^{1/2} \otimes \bH^{1/2} + \bI_d \otimes &\bH)^{-1} (\bH^{-1/2}\bx \otimes \bI_d)
(\vect \bH^{\otimes -r}  \otimes \bI_d )\\
&= (\bH^{1/2} \otimes \bH^{1/2} + \bI_d \otimes \bH)^{-1}
(\vect \bH^{\otimes -r}  \otimes \bH^{-1/2}\bx \otimes \bI_d)\\
&= \vect \bH^{\otimes -r} \otimes  [(\bH^{1/2} \otimes \bH^{1/2} + \bI_d \otimes \bH)^{-1}(\bH^{-1/2}\bx \otimes \bI_d)]\\
&= [\vect \bH^{\otimes -r} \otimes  (\bH^{1/2} \otimes \bH^{1/2} + \bI_d \otimes \bH)^{-1}] (\bI_{d^{2r}} \otimes \bH^{-1/2}\bx \otimes \bI_d),
\end{align*}
as desired.
\end{proof}

We now return to the task of finding the asymptotic order of $\mat\Xi_0$, $\mat\Xi_1$ and $\mat\Xi_2$. For that, some preliminary notation is needed. For any real function $a$ we denote its vector moment of order $p$ as $\bmu_p(a)=\int_{\mathbb R^d}\bx^{\otimes p}a(\bx)d\bx$. For instance, \cite{CD11} showed that $\mu_0(\tilde K)=-1$, $\bmu_1(\tilde K)=\bmu_2(\tilde K)=\bmu_3(\tilde K)=0$ and $\bmu_4(\tilde K)=6 m_2(K)^2
\boldsymbol{\mathcal S}_{d,4}(\VEC \bI_d)^{\otimes 2}$, where $\boldsymbol{\mathcal S}_{d,r}$ denotes the symmetrizer matrix of order $r$ \citep[see][]{Hol85}, defined as the (only) matrix such that pre-multiplying a Kronecker product of any $r$ vectors in $\mathbb R^d$ by $\S_{d,r}$ results in the average of all possible permutations of the $r$-fold product. We also introduce here the notation $\mat K_{m,n}$ for the commutation matrix of order $mn\times mn$ \citep{MN79}.

So taking this into account, for the calculation of the asymptotic order of $\mat\Xi_0$, a fourth order Taylor expansion of $\D^{\otimes 2r}f(\bx-\bH^{1/2}\bz)$, in the form of \citet[Theorem 1.4.8]{KvR05} or \cite{CDW11}, gives
\begin{align*}
(\D^{\otimes 2r}\tilde K&)_\bH*f(\bx)\\&=\int\D^{\otimes 2r}\tilde K(\bz)f(\bx-\bH^{1/2}\bz)d\bz\\
&=(\bH^{1/2})^{\otimes 2r}\int \tilde K(\bz)\D^{\otimes 2r}f(\bx-\bH^{1/2}\bz)d\bz\\
&\sim(\bH^{1/2})^{\otimes 2r}\sum_{p=0}^4\frac{(-1)^p}{p!}\int \tilde K(\bz)\big[\bI_{d^{2r}}\otimes(\bz^\top \bH^{1/2})^{\otimes p}\big]\D^{\otimes 2r+p}f(\bx)d\bz\\
&=(\bH^{1/2})^{\otimes 2r}\sum_{p=0}^4\frac{(-1)^p}{p!}\big[\bI_{d^{2r}}\otimes(\bmu_p(\tilde K)^\top (\bH^{1/2})^{\otimes p})\big]\D^{\otimes 2r+p}f(\bx)\\
&=-(\bH^{1/2})^{\otimes 2r}\D^{\otimes 2r}f(\bx)+\tfrac{1}{4}m_2(K)^2(\bH^{1/2})^{\otimes 2r}\big[\bI_{d^{2r}}\otimes((\VEC^\top  \bH)^{\otimes 2}\boldsymbol{\mathcal S}_{d,4})\big]\D^{\otimes 2r+4}f(\bx)\\
&=-(\bH^{1/2})^{\otimes 2r}\D^{\otimes 2r}f(\bx)+\tfrac{1}{4}m_2(K)^2\big[(\bH^{1/2})^{\otimes 2r}\otimes(\VEC^\top  \bH)^{\otimes 2}\big]\D^{\otimes 2r+4}f(\bx),
\end{align*}
Therefore, since $\vec^\top \bH^{\otimes -r}(\bH^{1/2})^{\otimes 2r}=\vec^\top \bI_{d^r}$ and $\D_\bH(\vec\bH)^{\otimes2}=(\bI_{d^2}\otimes\vec^\top \bH)(\bI_{d^4}+\mat K_{d^2,d^2})$, we obtain
\begin{align*}
\boldsymbol\varphi_\bH*f(\bx)&=\D_\bH\big[\vec^\top \bH^{\otimes -r}(\D^{\otimes 2r}\tilde K)_\bH*f(\bx)\big]\nonumber\\
&\sim\D_\bH\big\{\tfrac{1}{4}m_2(K)^2\big[\vec^\top \bI_{d^r}\otimes(\VEC^\top  \bH)^{\otimes 2}\big]\D^{\otimes 2r+4}f(\bx)\big\}\nonumber\\
&=\tfrac{1}{4}m_2(K)^2\big(\vec^\top \bI_{d^r}\otimes\bI_{d^2}\otimes\vec^\top \bH\big)[\bI_{d^{2r}}\otimes(\bI_{d^2}+\mat K_{d^2,d^2})]\D^{\otimes 2r+4}f(\bx)\nonumber\\
&=\tfrac{1}{2}m_2(K)^2\big(\vec^\top \bI_{d^r}\otimes\bI_{d^2}\otimes\vec^\top \bH\big)\D^{\otimes 2r+4}f(\bx).\label{phiHf}
\end{align*}
Using this,
\begin{align*}
\E[\boldsymbol{\varphi}_\bH(\bX_1-\bX_2)]&=\int\boldsymbol\varphi_\bH*f(\bx)f(\bx)d\bx\sim\tfrac{1}{2}m_2(K)^2\big(\vec^\top \bI_{d^r}\otimes\bI_{d^2}\otimes\vec^\top \bH\big)\boldsymbol\psi_{2r+4}
\end{align*}
and $\mat\Xi_0=O(\mat J_{d^2})\vec\bH\vec^\top \bH.$

Similarly,
\begin{align*}
\mat\Xi_1&=\int\boldsymbol{\varphi}_\bH*f(\bx)\boldsymbol{\varphi}_\bH*f(\bx)^\top f(\bx)d\bx\\
&\sim\tfrac{1}{4}m_2(K)^4\big(\vec^\top\bI_{d^r}\otimes\bI_{d^2}\otimes\vec^\top\bH\big)\bigg\{\int\D^{\otimes 2r+4}f(\bx)\D^{\otimes 2r+4}f(\bx)^\top f(\bx)d\bx\bigg\}\\
&\quad\times
\big(\vec\bI_{d^r}\otimes\bI_{d^2}\otimes\vec\bH\big)\\
&=O(\mat J_{d^2})\vec\bH\vec^\top\bH.
\end{align*}
Finally, note that $\mat\Xi_2=|\bH|^{-1/2}\mathbb
E[(\boldsymbol\varphi\boldsymbol\varphi^\top)_\bH(\bX_1-\bX_2)]$, where
$\boldsymbol\varphi(\bx)=\mat A(\D^{\otimes 2r}\tilde K)(\bx)+\mat B\boldsymbol\rho(\bx)$, which
also depends on $\bH$ through $\mat A$ and $\mat B$. Besides, $$\mathbb
E[(\boldsymbol\varphi\boldsymbol\varphi^\top)_\bH(\bX_1-\bX_2)]=\iint(\boldsymbol\varphi\boldsymbol\varphi^\top)(\bz)f(\by)f(\by+\bH^{1/2}\bz)
d\by d\bz\sim R(f)\int\boldsymbol\varphi(\bz)\boldsymbol\varphi(\bz)^\top d\bz$$ which, in view of
Lemma~\ref{lem:varphi}, leads to $\mat\Xi_2=O(\mat
J_{d^2}|\bH|^{-1/2})\vec\bH^{\otimes-(r+1)}\vec^\top\bH^{\otimes-(r+1)}$.

Putting all these together, since every element of $\bH_{\MISE,r}$ is $O(n^{-2/(d+2r+4)})$,
$$4n^{-1}(\mat\Xi_1-\mat\Xi_0)+2n^{-2}\mat\Xi_2\sim O(\mat J_{d^2}n^{-d/(d+2r+4)})\vec\bH_{\MISE,r}\vec^\top\bH_{\MISE,r}$$
and therefore
$\vec(\hat\bH_{\CV,r}-\bH_{\MISE,r})=O(\mat J_{d^2}n^{-d/(2d+4r+8)})\vec\bH_{\MISE,r}$.

\subsection{Convergence rate for the PI bandwidth}

Henceforth, in addition to (A1)--(A3) the following assumptions on the pilot kernel $L$ and the pilot bandwidth $\bG$ are made:
\begin{enumerate}
\item[(A4)] $L$ is a symmetric $d$-variate density such that $\int \bx \bx^\top L(\bx) \, d\bx=
    m_2(L) \bI_d$ and all its partial derivatives up to order $2r+4$ are bounded, continuous and square
    integrable.
 \item[(A5)] $\bG = \bG_n$ is a sequence of bandwidth matrices such that all entries of
$n^{-1} |\bG|^{-1/2} (\bG^{-1})^{\otimes r+2}$ and $\bG$ tend to zero as $n \rightarrow \infty$.
\end{enumerate}

To make use of Lemma \ref{lem:asymHr} once more, notice that the difference between the MISE and its estimate is
\begin{align*}
\mathrm{PI}_r(\bH)-\mathrm{MISE}_r(\bH)\sim(-1)^r \tfrac{m_2(K)^2}{4}
(\hat{\boldsymbol{\psi}}_{2r+4}(\bG) - \boldsymbol{\psi}_{2r+4})^\top\big(\vec \bI_{d^r}\otimes(\vec\bH)^{\otimes2}\big)
\end{align*}
so taking into account $\D_\bH(\vec\bH)^{\otimes2}=(\bI_{d^2}\otimes\vec^\top \bH)(\bI_{d^4}+\mat K_{d^2,d^2})$ again, we come to
\begin{align*}
\D_\bH[\mathrm{PI}_r(\bH)-\mathrm{MISE}_r(\bH)]&\sim(-1)^r \tfrac{m_2(K)^2}{2}(\VEC^\top \bI_{d^r}\otimes \bI_{d^2} \otimes \VEC^\top \bH)(\hat{\boldsymbol{\psi}}_{2r+4}(\bG) - \boldsymbol{\psi}_{2r+4}),
\end{align*}
so that the performance of $\bH_{\PI,r}$ is determined by the performance of
$\hat{\boldsymbol{\psi}}_{2r+4}(\bG)$ as an estimator of $\boldsymbol{\psi}_{2r+4}$.

From Theorem 2 in \citet{CD10} the optimal pilot bandwidth $\bG$ for the estimator $\hat{\boldsymbol{\psi}}_{2r+4}(\bG)$ is of order $n^{-2/(d+2r+6)}$, leading to $\mathbb E\big[ \|\hat{\boldsymbol{\psi}}_{2r+4}(\bG) - \boldsymbol{\psi}_{2r+4}
\|^2\big]=O(n^{-4/(d+2r+6)})$, and then $\D_\bH[\mathrm{PI}_r(\bH)-\mathrm{MISE}_r(\bH)]=O_P(n^{-2/(d+2r+6)}\mat{J}_{d^2})\VEC \bH.$
So finally we arrive to $\vec(\hat\bH_{\PI,r}-\bH_{\MISE,r})=O_P(n^{-2/(d+2r+6)}\mat{J}_{d^2})\VEC \bH_\MISE$
by applying Lemma~\ref{lem:asymHr}.

\subsection{Convergence rate for the SCV bandwidth}

As in \cite{CD11}, it can be shown that the function $\MISE_r$ can be replaced for $\MISE2_r$ everywhere in the asymptotic analysis, since the difference between their respective minimizers is of relative order faster than $n^{-1/2}$, which is the fastest attainable rate in bandwidth selection \citep{HM91}.

So to apply Lemma \ref{lem:asymHr} it is also possible consider $\MISE2_r$ instead of $\MISE_r$, hence we focus on analyzing the difference
$\SCV_r(\bH)-\MISE2_r (\bH)$ at $\bH$ of the same order as $\bH_{\MISE,r}$. To begin with, note that using a fourth order Taylor expansion of
$\D^{\otimes 2r}\bar{L} (\bG^{-1/2}\bx - \bG^{-1/2} \bH^{1/2}\bz)$ results in
\begin{align*}
\bar\Delta_\bH*&\D ^{\otimes 2r}\bar L_\bG (\bx)\\ &=
\int \bar{\Delta}_\bH (\bz) \D^{\otimes 2r} \bar{L}_\bG (\bx -\bz) \, d\bz \\
&= |\bG|^{-1/2} (\bG^{-1/2})^{\otimes 2r} \int \bar{\Delta} (\bz) \D^{\otimes 2r}\bar{L} (\bG^{-1/2}\bx - \bG^{-1/2} \bH^{1/2}\bz) \, d\bz \\
&\sim |\bG|^{-1/2} (\bG^{-1/2})^{\otimes 2r} \sum_{p=0}^4 \frac{(-1)^p}{p!} \int \bar{\Delta} (\bz) [\bI_{d^{2r}} \otimes
(\bz^\top \bH^{1/2} \bG^{-1/2})^{\otimes p}]  \D^{\otimes 2r+p} \bar{L}(\bG^{-1/2}\bx) \, d\bz\\
&= \tfrac{1}{4} m_2(K)^2 |\bG|^{-1/2} (\bG^{-1/2})^{\otimes 2r} [\bI_{d^{2r}} \otimes (\vect \bH)^{\otimes 2} (\bG^{-1/2})^{\otimes 4} ]\D^{\otimes 2r+4} \bar{L}(\bG^{-1/2}\bx)\\
&= \tfrac{1}{4} m_2(K)^2|\bG|^{-1/2} [\bI_{d^{2r}} \otimes (\vect \bH)^{\otimes 2}] (\bG^{-1/2})^{\otimes (2r+4)} \D^{ \otimes 2r+4} \bar{L}(\bG^{-1/2}\bx)\\
&= \tfrac{1}{4} m_2(K)^2 [\bI_{d^{2r}} \otimes (\vect \bH)^{\otimes 2}]  \D^{ \otimes 2r+4} \bar{L}_\bG(\bx),
\end{align*}
where we have made use of the fact that $\mu_0(\bar \Delta)=\bmu_1(\bar\Delta)=\bmu_2(\bar\Delta)=\bmu_3(\bar\Delta)=0$ and $\bmu_4(\bar \Delta)= 6 m_2(K)^2
\boldsymbol{\mathcal S}_{d,4}(\VEC \bI_d)^{\otimes 2}$, and that the entries of $\bG^{-1}\bH$ tend to zero as a consequence of (A3) and (A5).

This asymptotic approximation is then used to expand the terms in
\begin{multline*}
\E [ \SCV_r (\bH) - \MISE2_r (\bH)]= (-1)^r \vect \bI_{d^r} \bigg\{ n^{-1} \bar{\Delta}_\bH *\D^{\otimes 2r} \bar{L}_\bG(0) \\
+ (1-n^{-1})\E\big[  (\bar{\Delta}_\bH * \D^{\otimes 2r} \bar{L}_\bG)(\bX_1 - \bX_2)\big] - \int \bar{\Delta}_\bH * \D^{\otimes 2r} f(\bx)
f(\bx) \, d\bx\bigg\}.
\end{multline*}
Precisely, for the first term we have
\begin{align*}
\bar{\Delta}_\bH *\D^{\otimes 2r} \bar{L}_\bG(0) &\sim \tfrac{1}{4} m_2(K)^2 |\bG|^{-1/2} [\bI_{d^{2r}} \otimes (\vect \bH)^{\otimes 2}](\bG^{-1/2})^{\otimes 2r+4} \D^{\otimes 2r+4} \bar{L}(0),
\end{align*}
and for the second term
\begin{align*}
\E &\big[(\bar{\Delta}_\bH * \D^{\otimes 2r} \bar{L}_\bG)(\bX_1 - \bX_2)\big] \\
&\sim \tfrac{1}{4}m_2(K)^2[\bI_{d^{2r}} \otimes (\vect \bH)^{\otimes 2}]
\iint \D^{\otimes 2r+4} \bar{L}_\bG (\bx - \by) f(\bx) f(\by) \, d\bx d\by  \\
&= \tfrac{1}{4}m_2(K)^2 [\bI_{d^{2r}} \otimes (\vect \bH)^{\otimes 2}]
\iint  \bar{L}_\bG (\bx - \by) \D^{\otimes 2r+4}f(\bx) f(\by) \, d\bx d\by  \\
&\sim \tfrac{1}{4}m_2(K)^2 [\bI_{d^{2r}} \otimes (\vect \bH)^{\otimes 2}]
\iint  \bar{L} (\bw) \sum_{p=0}^2 \frac{(-1)^p}{p!}[\bI_{d^{2r+4}} \otimes (\bw^\top \bG^{1/2})^{\otimes p}] \\
&\quad \times \D^{\otimes 2r+4+p} f(\by) f(\by) \, d\bw d\by  \\
&= \tfrac{1}{4}m_2(K)^2 [\bI_{d^{2r}} \otimes (\vect \bH)^{\otimes 2}]
\sum_{p=0}^2  \frac{(-1)^p}{p!}[\bI_{d^{2r+4}} \otimes \{\bmu_p(\bar{L})^\top (\bG^{1/2})^{\otimes p}\}]\bpsi_{2r+4+p}\\
&=\tfrac{1}{4}m_2(K)^2 [\bI_{d^{2r}} \otimes (\vect \bH)^{\otimes 2}]\bpsi_{2r+4}
+ \tfrac{1}{4}m_2(K)^2 m_2(L)[\bI_{d^{2r}} \otimes (\vect \bH)^{\otimes 2} \otimes \vec^\top \bG] \bpsi_{2r+6},
\end{align*}
since $\bmu_0(\bar{L}) = 1, \bmu_1(\bar{L}) = 0 $ and $\bmu_2(\bar{L}) = 2 \bmu_2(L) = 2m_2(L) \vec \bI_d$.
Finally, noting that $\D^{\otimes 2r}\tilde K_\bH=(\bH^{-1/2})^{\otimes 2r}(\D^{\otimes 2r}\tilde K)_\bH$ and making use of the previously obtained expansion for $(\D^{\otimes 2r}\tilde K)_\bH*f$, the third term is
\begin{align*}
\int \bar{\Delta}_\bH * \D^{\otimes 2r} f(\bx) f(\bx) \, d\bx &= \int \D^{\otimes2r}\tilde K_\bH*f(\bx) f(\bx)\, d\bx+\bpsi_{2r}\\
&\sim \tfrac{1}{4}m_2(K)^2\big[\bI_{d^{2r}}\otimes(\VEC^\top \bH)^{\otimes 2}\big] \bpsi_{2r+4}.
\end{align*}
Thus,
\begin{align*}
\E [\SCV_r (\bH)- \MISE2_r (\bH)] &\sim
\tfrac{1}{4} m_2(K)^2 n^{-1}|\bG|^{-1/2}[\vec^\top\bI_{d^r} \otimes (\vect \bH)^{\otimes 2}](\bG^{-1/2})^{\otimes 2r+4} \D^{\otimes 2r+4} \bar{L}(0) \\
&\quad +  \tfrac{1}{4}m_2(K)^2 m_2(L)[\vec^\top\bI_{d^r} \otimes (\vect \bH)^{\otimes 2} \otimes \vec^\top \bG] \bpsi_{2r+6}
\end{align*}
Calculations in Section~\ref{sec:3} give $\bG$ is order $n^{-2/(2r+d+6)}$, as for the plug-in selector, so substituting to this into
the derivative of the previous equation yields
\begin{align*}
\E\{\D_\bH [\SCV_r (\bH)- \MISE2_r (\bH)] \}& = O ([n^{-1} |\bG|^{-1/2}(\tr \bG)^{-r-2} + \tr \bG] \mat{J}_{d^2}) \vec \bH \\
&=O(n^{-2/(2r+d+6)} \mat{J}_{d^2}) \vec \bH.
\end{align*}
Lemma~\ref{lem:asymHr} shows that
$\vec(\hat\bH_{\SCV,r}-\bH_{\MISE,r})$ is asymptotically equivalent to $\D_\bH[\mathrm{SCV}_r-\mathrm{MISE2}_r](\bH_{\MISE,r})$.
Since it was stated in Section~\ref{sec:3} that $\E \big[\| \vec (\hat{\bH}_{\SCV,r} - \bH_{\MISE,r})\|^2\big]$ is dominated
by its squared bias term, then
$\vec(\hat\bH_{\SCV,r}-\bH_{\MISE,r}) = O_P(n^{-2/(2r+d+6)} \mat{J}_{d^2}) \vec \bH_{\MISE,r}$.

\bibliographystyle{apalike}
%\bibliography{../../../../bibliography/biblio} \end{document}

\end{document}